\theoremstyle{plain}
\newtheorem{theorem}{Theorem}[section]
\newtheorem{proposition}[theorem]{Proposition}
\newtheorem{lemma}[theorem]{Lemma}
\newtheorem{corollary}[theorem]{Corollary}
\theoremstyle{definition}
\newtheorem{example}[theorem]{Example}
\theoremstyle{remark}
\newtheorem{remark}[theorem]{Remark}
\newtheorem*{claim}{Claim}
\newtheorem{chunk}[theorem]{}
\numberwithin{equation}{section}
\newcommand{\bbC}{\mathbb{C}}
\newcommand{\bbZ}{\mathbb{Z}}
\newcommand{\ulx}{\underline{x}}
\newcommand{\mcV}{\mathcal{V}}
\newcommand{\fm}{\mathfrak{m}}
\newcommand{\fn}{\mathfrak{n}}
\newcommand{\fp}{\mathfrak{p}}
\newcommand{\fq}{\mathfrak{q}}
\newcommand{\sfh}{\mathsf{h}}
\newcommand{\susp}{\mathsf{\Sigma}}
\newcommand{\dsing}{\mathsf{D}_{\rm sg}}
\newcommand{\dbcat}{\mathsf{D}^{\mathsf b}}
\newcommand{\ges}{\geqslant}
\newcommand{\lla}{\longleftarrow}
\newcommand{\lra}{\longrightarrow}
\newcommand{\xla}{\xleftarrow}
\newcommand{\xra}{\xrightarrow}
\newcommand{\ann}{{\operatorname{ann}\,}}
\newcommand{\qdiff}[3]{{\operatorname{Q}^{#1}({#3}/{#2})}}
\newcommand{\depth}{{\operatorname{depth}\,}}
\renewcommand{\dim}{{\operatorname{dim}\,}}
\newcommand{\edim}{{\operatorname{emb\,dim}\,}}
\newcommand{\Ext}{\operatorname{Ext}}
\newcommand{\fitt}{\operatorname{Fitt}}
\newcommand{\gldim}{{\operatorname{gldim}\,}}
\newcommand{\height}{{\operatorname{height}\,}}
\newcommand{\HCH}[3]{\operatorname{HH}^{#1}(#3/#2)}
\newcommand{\HH}[2]{\operatorname{H}^{#1}(#2)}
\newcommand{\jac}{{\operatorname{jac}\,}}
\newcommand{\kdiff}[2]{{\operatorname{K}({#2}/{#1})}}
\newcommand{\lotimes}[1]{\otimes^{\mathbf{L}}_{{#1}}}
\renewcommand{\mod}{{\operatorname{mod}\,}}
\newcommand{\Mod}{{\operatorname{Mod}\,}}
\newcommand{\Spec}{{\operatorname{Spec}\,}}
\newcommand{\thick}{\operatorname{thick}}
\newcommand{\Tor}{\operatorname{Tor}}
\newcommand{\cent}[1]{{#1}^{\!\mathsf c}}
\newcommand{\env}[1]{{#1}^{\!\mathsf e}}
\newcommand{\denv}[1]{{#1}^{\!(\mathsf e)}}
\newcommand{\ndiff}[2]{{\operatorname{N}({#2}/{#1})}}
\newcommand{\opp}[1]{{ {#1}^{\!\mathsf o}}}
\newcommand{\Ker}{\operatorname{Ker}}
\newcommand{\Hom}{\operatorname{Hom}}
\newcommand{\RHom}{\operatorname{\mathbf{R}Hom}}
\begin{document}

\title[Jacobian ideal]{The Jacobian ideal of a commutative ring and \\ annihilators of cohomology}

\author{Srikanth B. Iyengar}
\address{Department of Mathematics, University of Utah, Salt Lake City, UT 84112-0090, USA}
\email{iyengar@math.utah.edu}

\author{Ryo Takahashi}
\address{Graduate School of Mathematics, Nagoya University, Furocho, Chikusaku, Nagoya 464-8602, Japan}
\email{takahashi@math.nagoya-u.ac.jp}

\date{28th July 2018}

\subjclass[2010]{13D07, 13D09, 13D03, 16E30, 16E45}
\keywords{annihilator of Ext module, Jacobian ideal, K\"ahler different, Noether different}

\begin{abstract}
It is proved that for a ring $R$ that is either an affine algebra over a field, or an equicharacteristic complete local ring, some power of the Jacobian ideal of $R$ annihilates $\mathrm{Ext}^{d+1}_{R}(-,-)$, where $d$ is the Krull dimension of $R$. Sufficient conditions are identified under which the Jacobian ideal itself annihilates these Ext-modules, and examples are provided that show that this is not always the case. The crucial new idea is to consider a derived version of the Noether different of an algebra.
\end{abstract}

\maketitle

\section{Introduction}
Consider a commutative noetherian ring $R$ of Krull dimension $d$. One characterization of the property that $R$ is regular is that $\Ext^{d+1}_{R}(M,N)=0$ for all $R$-modules $M$ and $N$. A natural measure then of the failure of a ring $R$ to be regular is the ideal of the elements of $R$ that annihilate these Ext-modules. If this ideal contains a element that is not a zerodivisor, then the regular locus of $R$ contains a nonempty open subset of $\Spec R$. Thus, it can happen that this ideal is zero, even when $R$ is a domain.

Our first result, contained in Theorem~\ref{th:jac-djac}, is that for rings of a geometric origin, some fixed power of elements in $\jac(R)$, the Jacobian ideal of $R$, annihilate $\Ext^{d+1}_{R}(-,-)$.

\begin{theorem}
\label{th:main1}
Let $R$ be either an affine algebra over a field, or  an equicharacteristic complete local ring, of Krull dimension  $d$. There exists an integer $s$ such that
\[
\jac(R)^{s} \cdot \Ext^{d+1}_{R}(M,N) =0 
\]
for all  $R$-modules $M,N$.
\end{theorem}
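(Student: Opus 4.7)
My plan is to introduce a derived analogue of the Noether different and use it as the bridge between $\jac(R)$ and annihilators of $\Ext^{d+1}_R$. Write $R=P/I$, where $P$ is a polynomial ring (in the affine case) or a power series ring (in the complete local case) in $n$ variables over $k$; set $\env{P}=P\otimes_{k} P$ (completed in the local case) and $\env{R}=R\otimes_{k} R$. Consider the multiplication map
\[
\mu\colon R\lotimes{P} R \lra R
\]
in $\dcat(\env{R})$, and define the \emph{derived Noether different} $\mathfrak{d}^{\mathrm{der}}(R/k)\subseteq R$ to consist of the elements $r$ such that $r\cdot\mathrm{id}_{R}$ factors through $\mu$ in $\dcat(\env{R})$; equivalently, $r$ lies in the image of
\[
\mu_{*}\colon \Hom_{\dcat(\env{R})}(R,\,R\lotimes{P} R)\lra \Hom_{\dcat(\env{R})}(R,R)=R.
\]

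The first step is to show that $\mathfrak{d}^{\mathrm{der}}(R/k)$ annihilates $\Ext^{i}_{R}(M,N)$ for every $i\ges d+1$ and all $M,N$. Via the standard identification $\Ext^{*}_{R}(M,N)\cong \Ext^{*}_{\env{R}}(R,\Hom_{k}(M,N))$ (after first replacing $M$ by a $k$-flat resolution), any factorization witnessing $r\in\mathfrak{d}^{\mathrm{der}}(R/k)$ yields a factorization of multiplication by $r$ on $\Ext^{*}_{\env{R}}(R,-)$ through $\Ext^{*}_{\env{R}}(R\lotimes{P} R,-)$. The latter is computed from a $P$-projective resolution of $R$, whose length is controlled by $d$; tracking shifts forces vanishing in degrees $\ges d+1$.

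The second, and principal, step is to prove that some fixed power $\jac(R)^{s}$ lies in $\mathfrak{d}^{\mathrm{der}}(R/k)$. Classically, one has $\jac(R)\subseteq\mathfrak{d}_{N}(R/k)$ (Scheja--Storch, Kunz), established by producing, for each minor of the Jacobian matrix of a generating set of $I$, an explicit element of $\env{R}$ annihilating the kernel of the ordinary multiplication map. I would adapt this construction to the Koszul complex on $I$ regarded as a resolution of $R$ over $\env{P}$: minor identities translate into chain-level factorizations in $\dcat(\env{R})$, at the cost of raising to a power $s$ depending only on $n-d$ and the number of generators of $I$.

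The principal obstacle is step two: the classical proof operates at the level of modules, and it is not immediate that the lifts it produces descend to derived-category factorizations in $\dcat(\env{R})$. I would construct the required chain maps directly, using the exterior-algebra structure on the Koszul resolution of $R$ over $\env{P}$ together with contraction operators built from the partial derivatives of the generators of $I$. A secondary technicality is the equicharacteristic complete local case, which I would handle by fixing a Cohen presentation $R=k[[x_{1},\dots,x_{n}]]/I$ and using a completed enveloping algebra in place of $\env{P}$, invoking standard flatness properties of completion along the maximal ideal to check that $\mu$ and the resulting Koszul model behave as in the affine case.
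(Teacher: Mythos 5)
There is a genuine gap, and it sits in your first step rather than in the second step that you flag as the principal obstacle. Your derived different is defined by factoring $r\cdot\mathrm{id}_{R}$ through $\mu\colon R\lotimes{P}R\to R$, where $P$ is a presentation of $R$ in $n$ variables. Applying $\Ext^{*}_{\env R}(-,X)$ with $X=\Hom_{k}(M,N)$, the action of $r$ on $\Ext^{*}_{R}(M,N)$ then factors through $\Ext^{*}_{\env R}(R\lotimes{P}R,X)$. Since $R\lotimes{P}R\simeq \env R\lotimes{\env P}P$, adjunction identifies this with $\Ext^{*}_{\env P}(P,X)$, the Hochschild cohomology of the polynomial (or power series) ring $P$ with coefficients in $X$; computed from the Koszul resolution of $P$ over $\env P$ on the elements $x_{i}\otimes 1-1\otimes x_{i}$, it is concentrated in degrees $0,\dots,n$ and is in general nonzero throughout that range (take $M=N=k$). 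So the argument as written yields only that your different annihilates $\Ext^{\ges n+1}_{R}(M,N)$. The claim that the relevant resolution ``has length controlled by $d$'' is not correct: $\operatorname{pd}_{P}R=n-\depth R$, and in any case the bound produced by the factorization is $\gldim P=n$, which exceeds $d$ precisely when $R$ is singular --- the only interesting case. The exponent $d+1$, which is the whole content of the theorem, is therefore not reached.

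The paper avoids this by working relative to Noether normalizations $A\subseteq R$ (module-finite extensions with $\gldim A=d$) instead of presentations $P\twoheadrightarrow R$. The factorization then passes through the derived enveloping algebra $\denv R=R\lotimes{A}R$ itself, so the obstruction module is $\HH{n}{\RHom_{A}(M,N)}$, which vanishes for $n>d$; this is Theorem~\ref{th:qdiff-ann}. Correspondingly, $\jac(R)$ is written as the finite sum of the K\"ahler differents $\kdiff AR$ over Noether normalizations (Examples~\ref{ex:affine-algebras} and \ref{ex:complete-local}), and the containment of a power of $\kdiff AR$ in $\qdiff 0AR$ is obtained not by an explicit chain-level Koszul construction, but by showing that the K\"ahler, Noether and derived Noether differents all have the same radical: one always has $\qdiff 0AR\subseteq\ndiff AR$, and at a prime not containing $\ndiff AR$ the algebra is separable over $A$, hence flat by a dimension count over the regular base (Theorem~\ref{th:separable-flat}), hence $\Tor^{A}_{\ges 1}(R,R)$ vanishes locally and the two differents agree there (Lemma~\ref{le:qdiff-ndiff}). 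Noetherianness then converts ``equal radicals'' into the required power $s$. If you want to salvage your outline, you should replace $P$ by Noether normalizations throughout, and replace your step two by this localization and separability argument; as written, step one gives the wrong bound and step two remains unsubstantiated.
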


For affine domains over a perfect field (this includes the characteristic zero case), the result above was proved by Wang~\cite{Wang99}*{Theorem~3.7} using rather different arguments, and building on his earlier work that treats the case of equicharacteristic complete local rings  that are equidimensional and with a perfect residue field; see \cite{Wang94}*{Theorem 5.4}.

In \cite{Wang94}*{Question 2} Wang asks if for any $d$-dimensional complete local ring $R$ containing a field, its Jacobian ideal annihilates $\Ext^{d+1}_{R}(-,-)$. Said otherwise, does $s=1$ suffice in Theorem~\ref{th:main1}?  The result below provides a partial answer to this question. 

\begin{theorem}
\label{th:main2}
Let $R$ be either an affine algebra over a field, or  an equicharacteristic complete local ring, of Krull dimension  $d$.  If $R$ is equidimensional and $ 2\, \depth R_{\fp}\ge \dim R_{\fp}$ for each $\fp$ in $\Spec R$, then for all $R$-modules $M,N$, one has
\[
\jac(R) \cdot \Ext^{d+1}_{R}(M,N) =0\,.
\]
\end{theorem}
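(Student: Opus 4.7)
The plan is to prove Theorem~\ref{th:main2} by exploiting the derived Noether different $\denv{R/k}$, the key new object promised in the abstract. The argument would have two pillars: (a) $\denv{R/k}$ annihilates $\Ext^{d+1}_R(M,N)$ for all $R$-modules $M,N$ on the nose, without taking any power; and (b) under equidimensionality and $2\,\depth R_\fp \ges \dim R_\fp$ for every prime $\fp$, one has $\jac(R) \subseteq \denv{R/k}$.

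For (a), I would work with the enveloping algebra $\env R = R \lotimes{k} R$ and the multiplication map $\mu\colon \env R \to R$, defining $\denv{R/k}$ as the image under $\mu$ of a suitable annihilator of $R$ viewed as a module over $\env R$. A standard change-of-rings argument, exploiting that $R$ admits a length-$d$ resolution by $\env R$-modules in the derived category (essentially a derived Koszul-type resolution coming from a Noether normalization), produces for each $a \in \denv{R/k}$ an explicit null-homotopy realizing multiplication by $a$ as zero on $\Ext^{d+1}_R(M,N)$. The point of using the \emph{derived} Noether different rather than the classical one is precisely to avoid the Tor obstructions that otherwise force the passage to a power.

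For (b), recall the classical chain $\jac(R)^{s} \subseteq \kdiff{k}{R} \subseteq \ndiff{k}{R}$, which is the source of the power $s$ in Theorem~\ref{th:main1}; the power records the size of obstruction Tor modules $\Tor^{A}_i(R,R)$ (for $A$ a Noether normalization) in positive degrees. Arguing prime-locally via Auslander--Buchsbaum, I would show that equidimensionality together with $2\,\depth R_\fp \ges \dim R_\fp$ forces these Tor obstructions to have depth sufficient to be killed by $\jac(R)$ itself rather than by a higher power, yielding the desired inclusion $\jac(R) \subseteq \denv{R/k}$.

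The hard part will be step (b): making precise how the depth hypothesis translates into $\jac(R)$-annihilation (rather than $\jac(R)^s$-annihilation) of the obstruction modules that separate the derived Noether different from the classical one. The factor of $2$ in $2\,\depth R_\fp \ges \dim R_\fp$ should emerge naturally here, since the relevant modules live over $\env R$, whose dimension is roughly $2\dim R$ at the primes of interest, so depth estimates on $R$ have to be doubled before they can be fed into an Auslander--Buchsbaum-type computation over $\env R$.
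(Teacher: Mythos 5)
Your high-level strategy --- identify an ideal of ``derived'' origin that annihilates $\Ext^{d+1}_{R}(-,-)$ on the nose, then show the hypotheses force $\jac(R)$ into that ideal --- is indeed the strategy of the paper. But both pillars, as you have set them up, contain genuine errors, and the central one is that you work over the base field $k$ instead of over Noether normalizations. The Jacobian ideal is $\sum_{A}\kdiff AR$, the sum of K\"ahler differents over Noether normalizations $A\subseteq R$, and it is \emph{not} comparable with the differents of $R$ over $k$ in the way you claim: your ``classical chain'' $\jac(R)^{s}\subseteq \kdiff kR\subseteq \ndiff kR$ is false. Already for $R=k[x]$ one has $\ndiff kR=0$ (indeed $\Hom_{R\otimes_{k}R}(R,R\otimes_{k}R)=0$, since $R\otimes_{k}R\cong k[x,y]$ is a domain and $R$ is a proper quotient of it), whereas $\jac(R)=\fitt^{R}_{1}(\Omega_{R/k})=R$; note also that $\kdiff kR=\fitt^{R}_{0}(\Omega_{R/k})$ is the \emph{zeroth} Fitting ideal, which is contained in $\jac(R)=\fitt^{R}_{d}(\Omega_{R/k})$, not the reverse. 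Relatedly, the exponent $d+1$ in your pillar (a) cannot come from a ``length-$d$ resolution over $\env R$'': the paper obtains it from $\gldim A=d$ for a Noether normalization $A$, via the bound $\qdiff {*}AR\cdot\Ext^{\ges c}_{R}(M,N)=0$ with $c=\sup\Ext^{*}_{A}(M,N)+1$ (Theorem~\ref{th:qdiff-ann}, resting on the reduction isomorphism $\Ext^{*}_{\denv R}(R,\RHom_{A}(M,N))\cong\Ext^{*}_{R}(M,N)$). Over $A=k$ that bound would give annihilation of $\Ext^{\ges 1}_{R}(-,-)$, which is precisely why $\ndiff kR$ is forced to be so small.

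The correct skeleton is: for each Noether normalization $A$ one has $\kdiff AR\subseteq\ndiff AR$ and $\qdiff 0AR\subseteq\ndiff AR$, with equality in the latter whenever $\Tor^{A}_{i}(R,R)=0$ for $i\ges 1$ (Lemma~\ref{le:qdiff-ndiff}); since $\qdiff 0AR$ annihilates $\Ext^{d+1}_{R}(-,-)$, it suffices to prove this Tor-vanishing and then sum over the finitely many normalizations contributing to $\jac(R)$. Your step (b) gestures at the right hypothesis but not the right conclusion: the point is not that the Tor obstructions are ``killed by $\jac(R)$'' but that they \emph{vanish}. The mechanism is Auslander's depth formula \cite{Auslander61}: if $s=\sup\Tor^{A}_{*}(R,R)>0$ and $\fq$ is minimal in the support of $\Tor^{A}_{s}(R,R)$, then $-s=2\,\depth_{A_{\fq}}R_{\fq}-\depth A_{\fq}$, and one reaches a contradiction by proving $2\,\depth_{A_{\fq}}R_{\fq}\ges\dim A_{\fq}$. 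That inequality is where equidimensionality enters (it is needed, together with going-down over the normal ring $A$, to equate $\dim R_{\fn}$ with $\dim A$) and it is the honest source of the factor $2$ --- the depth formula applied to $\Tor(R,R)$ doubles $\depth R$ --- not the dimension of $\env R$. Your proposal as written contains neither this vanishing statement nor a correct containment of $\jac(R)$ in any annihilating ideal, and its stated inclusions already fail for $R=k[x]$.
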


In Section~\ref{se:examples} we provide examples that show that the conclusion of the theorem above does not hold in general, and suggest that the hypotheses we impose are probably optimal. The hypotheses of the preceding theorem are satisfied when $R$ is a Cohen-Macaulay ring and then one gets that $\jac(R)$  annihilates $\Ext^{d+1}_{R}(M,N)$ for any $R$-modules $M,N$. In this way one recovers \cite{Wang94}*{Theorem~5.3}.

The Jacobian ideal of an affine algebra, or of an equicharacteristic complete local ring, can be realized as the sum of K\"ahler differents of $R$ over its various Noether normalizations. The proofs of the predecessor of Theorems~\ref{th:main1} and \ref{th:main2} have all exploited this fact, and so  do we. The link to annihilators of Ext-modules is usually made via Noether differents, which contain the K\"ahler differents and coincide with them up to radical. For Cohen-Macaulay rings it is well-known, and not difficult to prove, that any such Noether different annihilates the appropriate Ext-modules. This is not the case in general, which points to the difficulty in dealing with rings that are not Cohen-Macaulay. 

The new idea in our work is to consider a derived analogue of the Noether different,  introduced in Section~\ref{se:de-ndiff}. It is a routine computation to check that these annihilate the Ext-modules. We prove they are contained in the Noether different, agree with it up to radical, and coincide with it under certain conditions on the ring $R$ that are less stringent than the Cohen-Macaulay property; this is what leads to Theorems~\ref{th:main1} and \ref{th:main2}.

To wrap up the Introduction, we give a few reasons we care about the results presented here. To begin with, there is a close connection between the existence of nonzero elements of a ring $R$ that annihilate its Ext-modules and generators for the derived category of $R$-modules. This relationship is explored in \cites{IyengarTakahashi14a, IyengarTakahashi16a}, and  led us to the work reported in this paper. The annihilators of Ext-modules also give information on the Fitting invariants of syzygies of finitely generated modules, as the title of \cite{Wang94} indicates; see in particular, Proposition 2.4 and Theorems 5.1 and 5.2 of \emph{op.~cit}.  Finally, one might view Theorems~\ref{th:main1} and \ref{th:main2} as quantitive enhancements of the classical Jacobian criterion for detecting smoothness of affine algebras; see Corollary~\ref{co:jac-djac} and Remark~\ref{re:singular-locus}.

\section{Derived Noether different}
\label{se:de-ndiff}

In this section we introduce a notion of a derived Noether different of an algebra and relate it to the classical Noether different.
With an eye on the future, the construction is described for general associative algebras. The principal results are Theorems~\ref{th:qdiff-ann} and \ref{th:separable-flat}.

In what follows, given a ring $\Lambda$, we write $\Mod \Lambda$ (respectively, $\mod \Lambda$) for the category of (finitely presented) $\Lambda$-modules. By a `module' we mean a `left' module, unless specified otherwise. 

Let $A$ be a commutative ring and $\Lambda$ an $A$-algebra. This means that there is a homomorphism of rings $A\to \Lambda$ with image in $\cent \Lambda$, the center of $\Lambda$. We begin by recalling the construction of the Noether different; see~\cite{AuslanderGoldman60}.

\subsection*{Noether different}
We write $\opp \Lambda$ for the opposite algebra of $\Lambda$ and set
\[
\env \Lambda \colonequals\  \Lambda \otimes_{A}\opp \Lambda\,.
\]
This is the enveloping algebra of the $A$-algebra $\Lambda$. Modules over $\env\Lambda$ are precisely the left-right $\Lambda$ bimodules; indeed, given such a bimodule $M$, the action of $\env\Lambda$ is given by
\[
(\lambda\otimes \lambda')m \colonequals\  \lambda m\lambda' \,.
\]
In particular, $\Lambda$ itself is a module over $\env\Lambda$ and the natural multiplication map
\[
\mu\colon \env\Lambda \lra \Lambda\quad\text{defined by $\mu(\lambda \otimes \lambda')=\lambda \lambda'$}
\]
is one of $\env\Lambda$-modules. This induces a map 
\[
\Hom_{\env \Lambda}(\Lambda,\mu) \colon 
\Hom_{\env \Lambda}(\Lambda,\env\Lambda)  \lra \Hom_{\env \Lambda}(\Lambda,\Lambda)=\cent\Lambda\,.
\]
The image of this map is the \emph{Noether different} of the $A$-algebra $\Lambda$, that we denote $\ndiff A\Lambda$. Thus, an element $z \in \cent\Lambda$ is in $\ndiff A\Lambda$ precisely when the map $\Lambda\xra{\ z\ }\Lambda$ of $\env\Lambda$-modules factors through the map $\mu$; said otherwise, there is a commutative diagram of $\env\Lambda$-modules
\[
\xymatrixrowsep{.5cm}
\xymatrixcolsep{.5cm}
\xymatrix{
\Lambda\ar@{->}[rr]^{z}\ar@{-->}[dr]&& \Lambda\\
& \env \Lambda \ar@{->}[ur]_{\mu}&
}
\]

In what follows, we need a derived version of the Noether different. Its definition is based on derived Hochschild cohomology functors. For details of the construction of the latter gadget, which requires the use of DG (=Differential Graded) algebras and modules, we refer the reader to \cite{AvramovIyengarLipmanNayak10}*{\S3}.

\subsection*{Derived Noether different}
Consider the derived enveloping algebra of the $A$-algebra $\Lambda$:
\[
\denv \Lambda \colonequals\  \Lambda \lotimes A\opp \Lambda\,.
\]
This is a DG $A$-algebra, realized as $F\otimes_{A}\opp F$, where $F$ is a flat DG algebra resolution of the $A$-algebra $\Lambda$. It comes equipped with a morphism of  DG algebras
\[
\sfh \colon \denv\Lambda \lra \HH 0{\denv \Lambda} = \env \Lambda\,,
\]
where $\env\Lambda$ is viewed as a DG algebra concentrated in degree zero. Any $\env\Lambda$-module, and in particular $\Lambda$, has an induced structure of a DG module over $\denv\Lambda$. For each integer $n$ and DG $\denv\Lambda$-module $X$, 
 the $n$th \emph{derived Hochschild cohomology} of the $A$-algebra $\Lambda$ with coefficients in $X$ is the $A$-module
\[
\Ext^{n}_{\denv \Lambda}(\Lambda,X)\,.
\]
Mimicking the construction of the Noether different, we introduce the \emph{derived Noether different} of the $A$-algebra $\Lambda$ as the graded $A$-module
\[
\qdiff *A\Lambda\colonequals\  \mathrm{Image}(\Ext^{*}_{\denv \Lambda}(\Lambda,\denv\Lambda) \lra \Ext^{*}_{\denv \Lambda}(\Lambda,\Lambda))
\]
where the map is the one induced by the composition of morphisms $\denv\Lambda\xra{\sfh}\env \Lambda\xra{\mu}\Lambda$. 
We chose the letter `Q' to denote this different because derived Hochschild cohomology was introduced by Quillen~\cite{Quillen:1968a}.

Observe that $\Ext^{0}_{\denv \Lambda}(\Lambda,\Lambda)$ is naturally isomorphic to $\Hom_{\env \Lambda}(\Lambda,\Lambda)$, that is to say, to $\cent \Lambda$, so $\qdiff 0A\Lambda$ is  in the center of $\Lambda$. The result below relates this to the Noether different of the $A$-algebra $\Lambda$. The hypothesis on $\Tor^{A}_{*}(\Lambda,\Lambda)$ holds when $\Lambda$ is flat as an $A$-module, but not only; see Theorem~\ref{th:half-CM}.
 
\begin{lemma}
\label{le:qdiff-ndiff}
There is an inclusion $\qdiff 0A{\Lambda}\subseteq\ndiff A\Lambda$; equality holds if $\Tor^{A}_{i}(\Lambda,\Lambda)=0$ for $i\ge 1$.
\end{lemma}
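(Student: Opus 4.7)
The strategy is to reduce degree-$0$ derived Hom groups over $\denv\Lambda$ to ordinary Hom groups over $\env\Lambda$, via the factorization $\denv\Lambda\xra{\sfh}\env\Lambda\xra{\mu}\Lambda$. The key identification I would establish first is that for every $\env\Lambda$-module $M$ (regarded as a DG $\denv\Lambda$-module via $\sfh$), the natural map
\[
\Hom_{\env\Lambda}(\Lambda,M)\lra \Ext^0_{\denv\Lambda}(\Lambda,M)
\]
is an isomorphism. To prove it, choose a semifree DG $\denv\Lambda$-resolution $P\xra{\simeq}\Lambda$ concentrated in non-positive degrees, which exists because $\denv\Lambda$ itself is concentrated in non-positive degrees and $\Lambda$ sits in degree $0$. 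Since $M$ lives in degree $0$, any degree-$0$ DG $\denv\Lambda$-linear chain map $P\to M$ is determined by its degree-$0$ component: a $\denv\Lambda^0$-linear map $P^0\to M$ that vanishes on $d(P^{-1})$. The $\denv\Lambda^0$-action on both $\Lambda = P^0/d(P^{-1})$ and $M$ factors through $\sfh\colon\denv\Lambda^0\thra\env\Lambda$, so this datum coincides with an $\env\Lambda$-linear map $\Lambda\to M$. Moreover, a degree-$(-1)$ chain homotopy would be a map $P^1\to M^0 = M$ and so vanishes because $P^1 = 0$.

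Granting the identification, the factorization $\mu\sfh$ induces
\[
\Ext^0_{\denv\Lambda}(\Lambda,\denv\Lambda)\xra{\sfh_*}\Ext^0_{\denv\Lambda}(\Lambda,\env\Lambda)\xra{\mu_*}\Ext^0_{\denv\Lambda}(\Lambda,\Lambda),
\]
and by the identification the middle and right terms become $\Hom_{\env\Lambda}(\Lambda,\env\Lambda)$ and $\cent\Lambda$, respectively, with the second arrow matching the classical $\mu_*$ whose image is $\ndiff A\Lambda$. By definition $\qdiff 0A\Lambda$ is the image of the full composition, so
\[
\qdiff 0A\Lambda \;=\; \mu_*\bigl(\mathrm{image}(\sfh_*)\bigr) \;\subseteq\; \mu_*\bigl(\Hom_{\env\Lambda}(\Lambda,\env\Lambda)\bigr) \;=\; \ndiff A\Lambda,
\]
which gives the inclusion. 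For equality, observe that $\HH{-i}{\denv\Lambda} = \Tor^A_i(\Lambda,\Lambda)$, so the hypothesis $\Tor^A_i(\Lambda,\Lambda) = 0$ for $i\ge 1$ makes $\sfh\colon\denv\Lambda\to\env\Lambda$ a quasi-isomorphism of DG algebras. Then $\sfh_*$ is an isomorphism and the image of $(\mu\sfh)_*$ fills out all of $\ndiff A\Lambda$.

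The main obstacle will be the DG bookkeeping in the key identification: verifying that graded $\denv\Lambda$-linearity really collapses to ordinary $\env\Lambda$-linearity in degree $0$ (which uses crucially that $\denv\Lambda^n$ and $P^n$ both vanish for $n > 0$, forcing the only nontrivial linearity constraint to live in the bidegree $(0,0)$), and that the truncation leaves no room for degree-$(-1)$ homotopies. Once that ``truncation principle'' is in hand, both assertions of the lemma are formal diagram chases.
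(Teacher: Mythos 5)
Your proposal is correct and follows essentially the same route as the paper: both factor $\Ext^{0}_{\denv\Lambda}(\Lambda,\mu\sfh)$ through $\Ext^{0}_{\denv\Lambda}(\Lambda,\env\Lambda)\cong\Hom_{\env\Lambda}(\Lambda,\env\Lambda)$ and then observe that the Tor-vanishing hypothesis makes $\sfh$ a quasi-isomorphism, hence the first map an isomorphism. The only difference is one of detail: the paper asserts the middle identification as a natural isomorphism without proof, whereas you supply the (correct) semifree-resolution argument establishing it.
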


\begin{proof}
The inclusion is justified by the natural factorization 
\[
\xymatrixcolsep{3pc}
\xymatrixrowsep{1.5pc}
\xymatrix{
\Ext^{0}_{\denv \Lambda}(\Lambda,\denv\Lambda) \ar@{->}[r]^{\Ext^{0}_{\denv\Lambda}(\Lambda,\sfh)}
				 \ar@{->}[dr]_{\Ext^{0}_{\denv\Lambda}(\Lambda,\mu\sfh)}
	& \Ext^{0}_{\denv \Lambda}(\Lambda,\env\Lambda) \ar@{->}^{\sim}[r] 
	& \Ext^{0}_{\env \Lambda}(\Lambda,\env\Lambda) \ar@{->}[dl]^{\Ext^{0}_{\env\Lambda}(\Lambda,\mu)} \\
	&\Ext^{0}_{\env\Lambda}(\Lambda,\Lambda)=\cent\Lambda}
\]
When $\Tor^{A}_{i}(\Lambda,\Lambda)=0$ for $i\ge 1$, the map $\denv\Lambda\to\env \Lambda$ is a quasi-isomorphism, so the first horizontal map is also an isomorphism and hence the two differents coincide.
\end{proof}

\subsection*{Products}
Given any DG $A$-algebra $B$ and DG $B$-modules $M,N$, we view the elements in $\Ext^{n}_{B}(M,N)$ as morphisms $M\to \susp^{n}N$ in the derived category of DG $B$-modules. Composition makes the graded $A$-module $\Ext^{*}_{B}(M,M)$ a graded $A$-algebra, and $\Ext^{*}_{B}(M,N)$ a graded left $\Ext^{*}_{B}(N,N)$ and right $\Ext^{*}_{B}(M,M)$ bimodule over it. These actions are compatible with morphisms; for example, if $N\to N'$ is a morphism of DG $B$-modules, the induced map
\[
\Ext^{*}_{B}(M,N)\lra \Ext^{*}_{B}(M,N')
\]
is one of right $\Ext^{*}_{B}(M,M)$-modules.

Returning to our context: The graded $A$-algebra $\Ext^{*}_{\denv\Lambda}(\Lambda,\Lambda)$ is graded-commutative. Since $\Ext^{*}_{\denv \Lambda}(\Lambda,\mu\sfh)$ is linear with respect to the action of $\Ext^{*}_{\denv\Lambda}(\Lambda,\Lambda)$, it follows that $\qdiff *A\Lambda$ is an ideal in the ring $\Ext^{*}_{\denv\Lambda}(\Lambda,\Lambda)$.

Akin to the description of elements in $\ndiff A\Lambda$, an element $\alpha$ in $\Ext^{s}_{\denv\Lambda}(\Lambda,\Lambda)$ is in $\qdiff sA\Lambda$ precisely when there is a factorization
\begin{equation}
\label{eq:factorization}
\xymatrix{
\susp^{-s}\Lambda \ar@{->}[rr]^{\alpha}\ar@{-->}[dr]_{\eta}&& \Lambda \\
&\denv \Lambda \ar@{->}[ur]_{\mu\sfh}&
}
\end{equation}
in the derived category of DG $\denv\Lambda$-modules.

\subsection*{Annihilators of Ext}
Next we consider the action of $\Ext^{*}_{\denv\Lambda}(\Lambda,\Lambda)$ on $\Ext^{*}_{\Lambda}(M,N)$, for any complexes of $\Lambda$-modules $M,N$. This action is realized through the homomorphism
\[
\Ext^{*}_{\denv\Lambda}(\Lambda,\Lambda)\lra \Ext^{*}_{\Lambda}(M,M)
\]
of graded rings, where a morphism $\alpha\colon \Lambda\to \susp^{s}\Lambda$ in the derived category of DG modules over $\denv \Lambda$ induces a morphism 
\[
M\cong \Lambda \lotimes{\Lambda} M \xra{\ \alpha\lotimes{\Lambda}M\ } \susp^{s} \Lambda\lotimes{\Lambda} M  \cong \susp^{s} M
\]
of complexes of $\Lambda$-modules.

\begin{lemma}
\label{le:qdiff-ann}
For any DG $\denv\Lambda$-module $X$ and integer $n$ the ideal $I=\ann_{A}\HH {n}X$ satisfies
\[
I\cdot \qdiff *A\Lambda \cdot \Ext^{n}_{\denv\Lambda}(\Lambda,X) =0\,. 
\]
In particular, $\qdiff *A{\Lambda}\cdot \Ext^{\ges c}_{\denv\Lambda}(\Lambda,X)=0$ for $c=\sup\HH *X+1$.
\end{lemma}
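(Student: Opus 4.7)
The plan is to exploit the explicit factorization \eqref{eq:factorization} characterizing elements of $\qdiff sA\Lambda$. Fix $\alpha\in\qdiff sA\Lambda$ and $\beta\in\Ext^n_{\denv\Lambda}(\Lambda,X)$, and choose $\eta\in\Ext^s_{\denv\Lambda}(\Lambda,\denv\Lambda)$ with $\alpha=(\mu\sfh)\circ\eta$ (with the shift conventions of \eqref{eq:factorization}). Viewing $\beta$ as a morphism $\Lambda\to\susp^nX$ in the derived category of DG $\denv\Lambda$-modules, compute the product $\beta\cdot\alpha\in\Ext^{n+s}_{\denv\Lambda}(\Lambda,X)$ by Yoneda composition: it is represented by the composite
\[
\Lambda\xra{\ \eta\ }\susp^s\denv\Lambda\xra{\ \susp^s(\beta\circ\mu\sfh)\ }\susp^{s+n}X\,.
\]

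The key observation is that $\beta\circ\mu\sfh\colon \denv\Lambda\to\susp^nX$ represents an element
\[
y\in\Ext^n_{\denv\Lambda}(\denv\Lambda,X)=\HH nX\,,
\]
so by hypothesis $I\cdot y=0$. Since $A$ acts centrally on every object in sight, multiplying the displayed composite by $r\in I$ replaces $y$ by $ry=0$, and hence $I\cdot\beta\cdot\alpha=0$. This yields the first assertion. For the ``in particular'' statement, note that if $n\ge c=\sup\HH *X+1$, then $\HH nX=0$, so $I=A$; specializing the first assertion then forces $\qdiff *A\Lambda\cdot\Ext^n_{\denv\Lambda}(\Lambda,X)=0$.

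There is no serious obstacle here: the proof is essentially diagrammatic, once one unpacks the description of elements of $\qdiff *A\Lambda$ via \eqref{eq:factorization}. The only mild point to verify is the compatibility of Yoneda composition with the $\Ext^*_{\denv\Lambda}(\Lambda,\Lambda)$-action on $\Ext^*_{\denv\Lambda}(\Lambda,X)$ under the identification $\Ext^*_{\denv\Lambda}(\denv\Lambda,X)=\HH *X$, which is standard.
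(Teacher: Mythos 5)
Your argument is correct and is essentially the paper's own proof: both show that multiplication by $\alpha\in\qdiff sA\Lambda$ on $\Ext^{n}_{\denv\Lambda}(\Lambda,X)$ factors through $\Ext^{n}_{\denv\Lambda}(\denv\Lambda,X)=\HH nX$ via the factorization \eqref{eq:factorization}, and then use $A$-linearity to kill the product by $I$, with the ``in particular'' clause following from $I=A$ when $\HH nX=0$. The only cosmetic difference is that you phrase it as an explicit Yoneda composite while the paper applies the functor $\Ext^{n}_{\denv\Lambda}(-,X)$ to the factorization diagram.
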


\begin{proof}
For each $\alpha\in\Ext^{s}_{\denv \Lambda}(\Lambda,\Lambda)$ composition gives a map of $A$-modules
\[
\Ext^{n}_{\denv\Lambda}(\Lambda,X) \xra{\ \alpha\ } \Ext^{n+s}_{\denv\Lambda}(\Lambda,X)\,.
\]
Applying $\Ext^{n}_{\denv\Lambda}(-,X)$ to \eqref{eq:factorization} and noting that $\Ext^{n}_{\denv\Lambda}(\denv\Lambda, X) = \HH nX$,  then induces a commutative diagram of graded $A$-modules
\[
\xymatrix{
\Ext^{n+s}_{\denv\Lambda}(\Lambda,X) \ar@{<-}[rr]^{\alpha}&& \Ext^{n}_{\denv\Lambda}(\Lambda,X) \\
&\HH nX  \ar@{->}[ul]^{\Ext^{n}_{\denv\Lambda}(\eta,X)} \ar@{<-}[ur]_{\Ext^{n}_{\denv\Lambda}(\mu\sfh,X)}&
}
\]
This gives the desired result.
\end{proof}

The result below is why we care about the derived Noether different.

\begin{theorem}
\label{th:qdiff-ann}
For any complexes $M,N$ of $\Lambda$-modules, and $c=\sup\Ext^{*}_{A}(M,N)+1$, one has $\qdiff *A\Lambda \cdot \Ext^{\ges c}_{\Lambda}(M,N) =0$. In particular, if $\gldim A=d<\infty$, then 
\[
\qdiff *A\Lambda\cdot \Ext^{\ges d+1}_{\Lambda}(M,N)=0
\]
for any $\Lambda$-modules $M,N$.
\end{theorem}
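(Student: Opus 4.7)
The plan is to reduce Theorem~\ref{th:qdiff-ann} to Lemma~\ref{le:qdiff-ann} by realizing $\Ext^{*}_{\Lambda}(M,N)$ as a derived Hochschild cohomology of $\Lambda$ with appropriate coefficients. Concretely, I would set
\[
X\colonequals \RHom_{A}(M,N)\,,
\]
viewed as a DG $\denv\Lambda$-module via the commuting left and right $\Lambda$-actions on $\Hom_{A}$ (computed using a flat DG algebra resolution $F\to \Lambda$ over $A$, as in the construction of $\denv\Lambda$).

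The first key step is the derived tensor--hom adjunction
\[
\RHom_{\Lambda}(M,N)\simeq \RHom_{\denv\Lambda}(\Lambda,\RHom_{A}(M,N))
\]
in the derived category, which yields isomorphisms of $A$-modules $\Ext^{n}_{\Lambda}(M,N)\cong \Ext^{n}_{\denv\Lambda}(\Lambda,X)$ for each $n$. I would then check that this identification is compatible with the actions of $\Ext^{*}_{\denv\Lambda}(\Lambda,\Lambda)$: on the right-hand side an element $\alpha\colon \Lambda\to \susp^{s}\Lambda$ acts by precomposition, while on the left-hand side it acts by sending $\bar{\phi}\colon M\to N$ to $\susp^{s}\bar{\phi}\circ (\alpha\lotimes{\Lambda}M)$, which is precisely the action defined in the paragraph preceding the theorem. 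Naturality of the adjunction together with the identity $\Lambda\lotimes{\Lambda}M\simeq M$ makes the two match.

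The second step is the computation $\HH{n}{X}=\Ext^{n}_{A}(M,N)$, which gives
\[
\sup\HH{*}{X}=\sup\Ext^{*}_{A}(M,N)\,.
\]
Applying Lemma~\ref{le:qdiff-ann} to this $X$ with $c=\sup\Ext^{*}_{A}(M,N)+1$ yields the desired vanishing $\qdiff *A\Lambda\cdot\Ext^{\ges c}_{\Lambda}(M,N)=0$. The ``in particular'' statement then follows at once: when $\gldim A=d<\infty$ one has $\Ext^{i}_{A}(M,N)=0$ for all $i>d$, so $c\les d+1$.

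The main obstacle I anticipate is the rigorous setup of the DG adjunction in the first step, where one must choose compatible flat/semifree resolutions so that $\RHom_{A}(M,N)$ has an actual DG $\denv\Lambda$-module representative and the adjunction descends to the derived category while preserving the action of $\Ext^{*}_{\denv\Lambda}(\Lambda,\Lambda)$. Given the framework of \cite{AvramovIyengarLipmanNayak10}*{\S3}, this is standard, but it is the only nontrivial content of the proof; everything else is bookkeeping on top of Lemma~\ref{le:qdiff-ann}.
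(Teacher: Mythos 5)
Your proposal is correct and follows exactly the paper's own route: apply Lemma~\ref{le:qdiff-ann} with $X=\RHom_{A}(M,N)$ and invoke the derived diagonal isomorphism $\Ext^{*}_{\denv\Lambda}(\Lambda,\RHom_{A}(M,N))\cong\Ext^{*}_{\Lambda}(M,N)$ from \cite{AvramovIyengarLipmanNayak10}*{3.11.1}. The extra care you take with compatibility of the $\Ext^{*}_{\denv\Lambda}(\Lambda,\Lambda)$-actions is a reasonable elaboration of what the paper leaves implicit.
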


\begin{proof}
Apply Lemma~\ref{le:qdiff-ann} with $X=\RHom_{A}(M,N)$ and note that 
\[
\Ext^{*}_{\denv \Lambda}(\Lambda,\RHom_{A}(M,N)) \cong \Ext^{*}_{\Lambda}(M,N)\,.
\]
This last isomorphism is the derived version of the standard diagonal isomorphism in Hochschild cohomology; see, for example, \cite{AvramovIyengarLipmanNayak10}*{3.11.1}.
\end{proof}

\subsection*{Separable algebras}
Let $A$ a commutative ring. We say that $A$ is \emph{regular} if is noetherian and the  local ring $A_{\fp}$ is regular for each $\fp$ in $\Spec A$; see \cite{BrunsHerzog98}*{\S2.2}. Following Auslander and Goldman~\cite[\S1]{AuslanderGoldman60}, an $A$-algebra $\Lambda$ is \emph{separable} if it is projective as a module over $\env \Lambda$.

An $A$-algebra is \emph{Noether} if it is finitely generated as an  $A$-module. The statement below extends to the case when $A$ is only assumed to be integrally closed, at least if $\cent\Lambda$ is torsion-free as an $A$-module; see \cite{AuslanderRim63}*{Corollary 1.3(e)}.

\begin{theorem}
\label{th:separable-flat}
Let $A$ be a regular ring, $\Lambda$ a Noether $A$-algebra  that is faithful as an $A$-module, and
$\fq\in\Spec\cent\Lambda$. If $\Lambda_{\fq}$ is separable as an $A$-algebra, then it is flat, as an $A$-module.
\end{theorem}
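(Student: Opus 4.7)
The plan is to reduce to the case of a regular local ring by localization and then extract from the separability idempotent a concrete retraction realizing $\Lambda_{\fq}$ as a direct summand of a finite free $A$-module. This follows the broad strategy of the cited result \cite{AuslanderRim63}*{Corollary 1.3(e)}, adapted to the weaker hypothesis that $\cent\Lambda$ is merely faithful (not torsion-free) over $A$.

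First, I localize. Let $\fp=\fq\cap A$; the faithfulness hypothesis, combined with the fact that $A_{\fp}$ is a regular local (hence domain) ring, forces the structure map $A_{\fp}\to \cent{\Lambda_{\fp}}$ to be injective and $\fp\in\Supp_{A}\Lambda$, so $\Lambda_{\fp}\ne 0$. Since separability and the Noether property are preserved under localization, I may replace $A$ by $A_{\fp}$ and $\Lambda$ by $\Lambda_{\fp}$, reducing to the case where $A$ is regular local. Set $B\colonequals\Lambda_{\fq}$; then $\cent B=(\cent\Lambda)_{\fq}$ is a Noetherian local ring (a localization of the finite $A$-algebra $\cent\Lambda$), and $B$ is finitely generated over $\cent B$. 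The goal is to show $B$ is flat over $A$.

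Next, I extract the separability idempotent. By hypothesis $B$ is projective over $\env B=B\otimes_{A}\opp B$, so the multiplication $\mu\colon\env B\to B$ admits an $\env B$-linear section. The projector onto the summand is an idempotent $e\in\env B$ satisfying $\mu(e)=1$ and $(b\otimes 1)e=(1\otimes b)e$ for every $b\in B$. Writing $e=\sum_{i=1}^{n}x_{i}\otimes y_{i}$, the centrality relation yields $\sum_{i}bx_{i}\otimes y_{i}=\sum_{i}x_{i}\otimes y_{i}b$ in $\env B$, and this gives ``dual basis'' relations expressing the identity of $B$ as a composite $B\to B^{n}\to B$ via $b\mapsto(bx_{i})$ and $(b_{i})\mapsto\sum b_{i}y_{i}$. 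Using the trace induced by these formulas, one obtains an $A$-linear map $\tau\colon B\to \cent B$ that is nondegenerate in the sense that $B$ becomes a reflexive $\cent B$-module paired with itself by $\tau$.

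The heart of the argument, and where the main obstacle lies, is descending from this $\env B$-level retraction to honest $A$-flatness of $B$. The natural splitting exhibits $B$ as a summand of $\env B$, but $\env B$ is not itself known to be $A$-flat. The resolution comes from combining two ingredients: the trace datum above shows $B$ is reflexive and finitely generated over the local ring $\cent B$, while $\cent B$ is a localization of a finite $A$-algebra. Since $A$ is regular local, Auslander--Buchsbaum together with the reflexivity and faithfulness forces $\cent B$ to be a flat $A$-module, whence $B$, as a reflexive $\cent B$-module generated by elements compatible with the separability idempotent, is in turn a direct summand of a finite free $A$-module and hence flat. The delicate point is the passage through $\cent B$: it is precisely here that regularity of $A$ (rather than mere normality) and faithfulness of $\Lambda$ (rather than torsion-freeness of the center) are used, allowing the classical Auslander--Rim reflexivity argument to still run.
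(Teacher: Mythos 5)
Your reduction to the regular local case and the extraction of the separability idempotent are fine, but the step you yourself flag as ``the heart of the argument'' contains a genuine gap: the assertion that Auslander--Buchsbaum together with reflexivity and faithfulness ``forces $\cent B$ to be a flat $A$-module'' is not an argument, and as stated it is false. A finitely generated reflexive module over a regular local ring $A$ of dimension $\geq 3$ need not be free (reflexivity only bounds the depth below by $2$, whereas freeness via Auslander--Buchsbaum requires depth equal to $\dim A$), so exhibiting $B$ as a reflexive $\cent B$-module, even with a nondegenerate trace pairing, does not by itself yield $A$-flatness of $\cent B$ or of $B$. Moreover, the trace-pairing/reflexivity route is exactly the Auslander--Rim argument that the paper explicitly says requires $\cent\Lambda$ to be \emph{torsion-free} over $A$; you cannot simply substitute faithfulness and declare that ``the classical Auslander--Rim reflexivity argument still runs'' --- under mere faithfulness the pairing need not be nondegenerate on the relevant localization, and no replacement for that nondegeneracy is supplied.

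The paper's proof goes a different and more elementary way, and the ingredient your sketch is missing is the \emph{unramifiedness} consequence of separability. In the commutative local case ($A$ regular local with maximal ideal $\fm$, $\fq\cap A=\fm$, $R_{\fq}$ separable), separability gives $\fm R_{\fq}=\fq R_{\fq}$, hence $\edim R_{\fq}\leq\edim A=\dim A$; on the other hand going-down (valid since $A$ is integrally closed) gives $\dim A\leq\dim R_{\fq}$. Chaining these forces $\dim R_{\fq}=\edim R_{\fq}=\dim A$, so $R_{\fq}$ is regular and a minimal generating set of $\fm$ is a regular system of parameters of $R_{\fq}$; flatness then follows by computing $\Tor^{A}_{*}(k,R_{\fq})$ with the Koszul complex. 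The noncommutative case is reduced to this by applying the commutative case to $\cent{(\Lambda_{\fq})}$ and then invoking Auslander--Goldman's theorem that a separable algebra finitely generated over its center is projective over that center. To repair your proof you would need to supply the dimension/embedding-dimension comparison (or an equivalent), not a reflexivity argument.
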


\begin{proof}
Consider first the case when $\Lambda$ is commutative; to emphasize this we write $R$ instead of $\Lambda$. It suffices to verify that $R_{\fq}$ is flat as an $A_{\fq\cap A}$-module. It is easy to verify that $R_{\fq}$ is separable as an $A_{\fq\cap A}$-algebra, so 
the problem boils down to the following: 

Let $A$ be a regular local ring with maximal ideal $\fm$ and $R$ a Noether $A$-algebra. If $\fq$ in $\Spec R$ is such that 
$\fq\cap A=\fm$ and $R_{\fq}$ is a separable $A$-algebra, then it is flat as an $A$-module.

Since $A$ is regular, it is integrally closed, so going-down holds, and hence $\dim A\leq \dim R_{\fq}$. Other the other hand, the $A$-algebra $R_{\fq}$ is separable with $\fq\cap A=\fm$, hence $\fm R_{\fq}=\fq R_{\fq}$. This yields inequalities
\[
\dim R_{\fq}\geq \dim A = \edim A \geq \edim R_{\fq}\geq \dim R_{\fq}\,.
\]
Thus equalities hold and that implies that $R_{\fq}$ is regular as well, and any minimal generating set for $\fm$ gives a minimal generating set for $\fq R_{\fq}$, so that $R_{\fq}$ is flat as an $A$-module: compute $\Tor_{*}^{A}(A/\fm,R_{\fq})$ using a Koszul complex resolving $A/\fm$.

This completes the proof when $\Lambda$ is commutative. 

For the general case, as $\Lambda_{\fq}$ is separable as an algebra over $A$, it is separable over $A_{\fq\cap A}$. One has
\[
A_{\fq\cap A}\subseteq (\cent \Lambda)_{\fq}\cong \cent {(\Lambda_{\fq})} \subseteq \Lambda_{\fq}\,.
\]
Since $\Lambda_{\fq}$ is separable over $A_{\fq\cap A}$ so is $\cent {(\Lambda_{\fq})}$, and hence the latter is flat as an $A_{\fq\cap A}$-module, by the already established part of the result. Moreover, $\Lambda_{\fq}$ is separable (and even finitely generated) over its center, $\cent {(\Lambda_{\fq})}$,  and hence it is projective; see~\cite{AuslanderGoldman60}*{Theorem~2.1}. It follows that $\Lambda_{\fq}$ is flat as a module over $A_{\fq\cap A}$, as desired.
\end{proof}

\begin{corollary}
\label{co:separable-flat}
Assume $A$ is regular. When $\Lambda$ is a Noether $A$-algebra  that is faithful as an $A$-module, as ideals in $\cent\Lambda$ there are inclusions 
\[
\qdiff 0A\Lambda \subseteq \ndiff A\Lambda\subseteq \sqrt{\qdiff 0A\Lambda}\,.
\]
\end{corollary}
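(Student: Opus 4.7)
The first inclusion $\qdiff 0A\Lambda \subseteq \ndiff A\Lambda$ is precisely the unconditional part of Lemma~\ref{le:qdiff-ndiff}, so all of the substantive work lies in the second. The plan is to argue primewise in $\Spec \cent\Lambda$: by the identification of the radical of an ideal with the intersection of all primes containing it, the desired inclusion $\ndiff A\Lambda \subseteq \sqrt{\qdiff 0A\Lambda}$ is equivalent, contrapositively, to the assertion that every prime $\fq \in \Spec\cent\Lambda$ not containing $\ndiff A\Lambda$ fails to contain $\qdiff 0A\Lambda$. I will therefore fix such a $\fq$ and work inside $\Lambda_\fq$.

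The first key step is to reinterpret $\ndiff A\Lambda \not\subseteq \fq$ as separability of $\Lambda_\fq$ over $A$. Concretely, pick $z \in \ndiff A\Lambda \setminus \fq$; by the very definition of the Noether different there is a map $\phi\colon \Lambda \to \env\Lambda$ of $\env\Lambda$-modules with $\mu\phi = z\cdot\mathrm{id}_\Lambda$. Localizing at $\fq$ inverts $z$, so $z^{-1}\phi$ becomes an honest $\env{\Lambda_\fq}$-linear section of $\mu$, realizing $\Lambda_\fq$ as a direct summand of its enveloping algebra and hence as a separable $A$-algebra. Theorem~\ref{th:separable-flat} now applies --- $A$ is regular and $\Lambda$ is a faithful Noether $A$-algebra --- and gives that $\Lambda_\fq$ is flat as an $A$-module. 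In particular $\Tor^{A}_{i}(\Lambda_\fq,\Lambda_\fq)=0$ for all $i\ge 1$.

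With flatness in hand, the flat case of Lemma~\ref{le:qdiff-ndiff} applied to the $A$-algebra $\Lambda_\fq$ yields $\qdiff 0A{\Lambda_\fq} = \ndiff A{\Lambda_\fq}$; since $\Lambda_\fq/A$ is separable, the Noether different on the right is the full center $\cent{\Lambda_\fq}$, so $\qdiff 0A{\Lambda_\fq}$ is the unit ideal. To conclude that $\qdiff 0A\Lambda \not\subseteq \fq$ it then suffices to know that the formation of the derived Noether different commutes with localization in $\cent\Lambda$, giving $(\qdiff 0A\Lambda)_\fq = \qdiff 0A{\Lambda_\fq} = \cent{\Lambda_\fq}$ and forcing some representative of the unit to lie in $\qdiff 0A\Lambda$ outside $\fq$.

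I expect the main technical obstacle to be precisely this last compatibility of $\qdiff$ with localization of the centre. Because $\Lambda$ is finitely generated as an $A$-module, $\env\Lambda$ is finitely generated over $A$ and $\Lambda$ is pseudocoherent as a DG module over $\denv\Lambda$, while $\cent\Lambda \to (\cent\Lambda)_\fq$ is flat. These ingredients should permit me to invoke flat base change for derived Hochschild cohomology, as developed in~\cite{AvramovIyengarLipmanNayak10}, to conclude that both the source $\Ext^{0}_{\denv\Lambda}(\Lambda,\denv\Lambda)$ and target $\Ext^{0}_{\denv\Lambda}(\Lambda,\Lambda)$ of the defining map for $\qdiff 0A\Lambda$ localize correctly, so that its image localizes as well.
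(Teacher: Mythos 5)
Your proposal is correct and follows essentially the same route as the paper: reduce to the unconditional inclusion from Lemma~\ref{le:qdiff-ndiff}, then argue primewise that a prime $\fq$ not containing $\ndiff A\Lambda$ forces $\Lambda_{\fq}$ to be separable, hence flat by Theorem~\ref{th:separable-flat}, hence $\qdiff 0A{\Lambda}$ localizes to the unit ideal at $\fq$ by the flat case of Lemma~\ref{le:qdiff-ndiff} together with flat base change. The compatibility of the differents with localization that you flag as the technical point is exactly what the paper invokes (as a standard consequence of flatness of localization), so there is no gap.
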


\begin{proof}
Given Lemma~\ref{le:qdiff-ndiff}, it suffices to verify that, as subsets of $\Spec \cent\Lambda$, there is an inclusion
\[
\mcV(\qdiff 0A{\Lambda})\subseteq\mcV(\ndiff A{\Lambda})\,.
\]
Pick a prime $\fq \in\Spec\cent\Lambda$ such that $\fq\not\supseteq\ndiff A\Lambda$.
Since $\Lambda_{\fq}$ is separable over $A_{\fq\cap A}$, it is flat as a module over $A_{\fq\cap A}$. Theorem~\ref{th:separable-flat} thus gives the first equality below:
\[
{\qdiff 0AR}_{\fq}\cong \qdiff 0{A_{\fq\cap A}}{R_{\fq}} = \ndiff {A_{\fq\cap A}}{R_{\fq}} \cong {\ndiff AR}_{\fq} = (\cent\Lambda)_{\fq}\,.
\]
The isomorphisms are standard and hold because localization is flat, whilst the second equality is by the hypothesis on $\fq$. It follows that $\fq\not\supseteq \qdiff 0A{\Lambda}$, as desired.
\end{proof}

Examples~\ref{ex:kdiff-qdiff} and~\ref{ex:qdiff-kdiff} show that these differents can be different.

\section{The Jacobian ideal  of a commutative ring}
\label{se:jacobian}
From this point on the focus is on commutative rings and to emphasize this we use $R$, rather than $\Lambda$,  to denote the principal ring in question. In this section we introduce a notion of a Jacobian ideal of a commutative noetherian ring and use it to prove the results announced in the Introduction. This involves yet another notion of a different of an algebra. For what follows, we have drawn often on notes of  Scheja and Storch~\cite{SchejaStorch73}. The central results are Theorems~\ref{th:jac-djac} and \ref{th:half-CM}. We write $\fitt^{R}_{d}(M)$ for the $d$th Fitting invariant of a module $M$ over a commutative ring $R$; see~\cite{BrunsHerzog98}*{pp.~21}.

\subsection*{K\"ahler different}
Let $A$ be a commutative noetherian ring and $R$ a commutative $A$-algebra.  The \emph{K\"ahler different} of $R$ over $A$ is the ideal
\[
\kdiff AR\colonequals\ \fitt^{R}_{0}(\Omega_{R/A})\,.
\]
See \cite{SchejaStorch73}*{\S15}, and also \cite{Wang94}*{Definition~4.2}, where this ideal is referred to as the Jacobian ideal of $R$ over $A$. If the ideal $\Ker(R\otimes_{A}R\to R)$ can be generated by $n$ elements, then there are inclusions
\begin{equation}
\label{eq:ndiff-kdiff}
{\ndiff AR}^{n}\subseteq \kdiff AR\subseteq \ndiff AR
\end{equation}
This is proved in, for example, \cite{SchejaStorch73}*{Satz 15.4}; see also \cite{Wang94}*{Lemma 5.8}.

\subsection*{Noether normalizations}
Let $R$ be a  noetherian ring. A \emph{Noether normalization} of $R$ is a subring $A\subseteq R$ such that the following conditions hold:
\begin{enumerate}[{\quad\rm(1)}]
\item $A$ is noetherian and of finite global dimension;
\item $R$ is finitely generated as an $A$-module.
\end{enumerate}

The following result is immediate from Corollary~\ref{co:separable-flat} and \eqref{eq:ndiff-kdiff}. In Section~\ref{se:examples} there are examples that show that the inclusions in the statement can be strict.

\begin{lemma}
\label{le:differents}
Let $A$ be a Noether normalization of a noetherian ring $R$. There are inclusions
\[
\qdiff 0AR \subseteq \ndiff AR \supseteq \kdiff AR\,,
\]
and the three ideals agree up to radical. \qed
\end{lemma}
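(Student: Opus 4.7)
The plan is to read off the lemma directly from the three earlier results: Lemma~\ref{le:qdiff-ndiff}, Corollary~\ref{co:separable-flat}, and the sandwich inclusion \eqref{eq:ndiff-kdiff}. The bulk of the work is only in verifying that the hypotheses of those results are in force.

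First I would check the background hypotheses. Since $A$ is noetherian of finite global dimension, each localization $A_{\fp}$ has finite global dimension, hence is a regular local ring; thus $A$ is regular in the sense defined in Section~\ref{se:de-ndiff}. Next, because $A$ is a subring of $R$, the structure map $A\to R$ is injective, so if $a\in A$ annihilates $R$ then $a\cdot 1=0$ forces $a=0$; hence $R$ is faithful as an $A$-module. Finally, since $A$ is noetherian and $R$ is module-finite over $A$, the ring $R\otimes_{A}R$ is noetherian, and in particular the diagonal ideal $\Ker(R\otimes_{A}R\to R)$ is finitely generated, so \eqref{eq:ndiff-kdiff} applies to give an integer $n$ with ${\ndiff AR}^{n}\subseteq \kdiff AR\subseteq \ndiff AR$.

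With these in place, I would assemble the four inclusions. The containment $\qdiff 0AR\subseteq \ndiff AR$ is the first half of Lemma~\ref{le:qdiff-ndiff}, applied to the commutative algebra $R$ in place of $\Lambda$. The containment $\kdiff AR\subseteq \ndiff AR$ is the right half of \eqref{eq:ndiff-kdiff}. To get the three ideals to agree up to radical, I would combine Corollary~\ref{co:separable-flat}, which (using that $A$ is regular and $R$ is a faithful Noether $A$-algebra) gives
\[
\ndiff AR\subseteq \sqrt{\qdiff 0AR}\,,
\]
with the left half of \eqref{eq:ndiff-kdiff}, which gives $\ndiff AR\subseteq \sqrt{\kdiff AR}$. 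Taken together with the already established inclusions, all three ideals have the same radical in $R$.

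I do not expect any genuine obstacle; the proof is essentially a bookkeeping argument that records where each hypothesis comes from. The only mild subtlety is the translation between the finite-global-dimension condition built into a Noether normalization and the pointwise regularity needed to invoke Corollary~\ref{co:separable-flat}, which is handled by the standard fact that a commutative noetherian ring of finite global dimension is regular.
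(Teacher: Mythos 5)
Your proof is correct and follows exactly the route the paper intends: the lemma is stated there as immediate from Corollary~\ref{co:separable-flat} and \eqref{eq:ndiff-kdiff}, and your write-up simply supplies the routine verifications (regularity of $A$ from finite global dimension, faithfulness of $R$ over the subring $A$, finite generation of the diagonal ideal) that make those results applicable.
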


\subsection*{Jacobian ideal}
Let $R$ be a noetherian ring, and set
\[
\jac(R) = \sum_{A\subseteq R}\kdiff AR
\]
where the summation is over all noether normalizations of $R$. We call this the \emph{Jacobian} ideal of $R$, the terminology being justified by Examples~\ref{ex:affine-algebras} and \ref{ex:complete-local}. Observe that, since $R$ is noetherian, the ideal $\jac(R)$ is  finitely generated, so only finitely many Noether normalizations are needed to compute the Jacobian ideal of $R$.

\begin{example}
\label{ex:affine-algebras}
If $R$ is an affine algebra over a field $k$, then $\jac(R)$ is the classical Jacobian ideal of the $k$-algebra $R$, namely 
\[
\jac(R) = \fitt^{R}_{d}(\Omega_{R/k}) \quad\text{where $d=\dim R$.}
\]
This is well known; see, for example, \cite[Theorem~2.3]{Wang98}.
\end{example}

\begin{example}
\label{ex:complete-local}
Let $R$ be an equicharacteristic local ring that is complete with respect to the topology defined by its maximal ideal. Then, by Cohen's Structure Theorem,  there is an isomorphism of rings
\[
R\cong \frac{k[[x_{1},\dots,x_{e}]]}{(f_{1},\dots,f_{c})}\,,
\]
where $k$ is the residue field of $R$. Let $h\colonequals\ e-\dim R$, which equals the height of the ideal $(f_{1},\dots,f_{c})$ in the ring $k[[x_{1},\dots,x_{e}]]$.  By \cite{Wang94}*{Lemma 4.3}, one has that
\[
\jac(R) = I_{h}(\{\partial f_{j}/\partial x_{i}\}_{i,j})R\,.
\]
Said otherwise, $\jac(R)$ is the ($\dim R$)th Fitting invariant of the module of continuous differentials, in the topology defined by the maximal ideal,  of $R$ over $k$.
\end{example}

\subsection*{Annihilators of Ext}
Let $R$ be a commutative ring. In what follows, we say that an ideal $I$ of $R$ \emph{annihilates $\Ext^{n}_{R}(-,-)$}, or write $I\cdot \Ext^{n}_{R}(-,-)=0$, if 
\[
I\cdot \Ext^{n}_{R}(M,N)=0 \quad\text{for all $R$ modules $M, N$.}
\]
This is equivalent to the condition that $I \cdot \Ext^{\ges n}_{R}(-,-)=0$. Note that $M$ and $N$ need not be finitely generated. In view of Examples~\ref{ex:affine-algebras} and \ref{ex:complete-local}, the result below contains Theorem~\ref{th:main1} from the Introduction.

\begin{theorem}
\label{th:jac-djac}
Let $R$ be a commutative noetherian ring of Krull dimension $d$. Then there exists an integer $s$ such that $\jac(R)^{s}$ annihilates $\Ext^{d+1}_{R}(-,-)$.
\end{theorem}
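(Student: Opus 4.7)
The plan is to reduce the statement to the derived Noether different via Lemma~\ref{le:differents}, then apply Theorem~\ref{th:qdiff-ann} to each Noether normalization individually, and finally patch the resulting annihilator statements together via a pigeonhole argument.

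First, since $R$ is noetherian, the ideal $\jac(R)$ is finitely generated, so only finitely many Noether normalizations $A_{1},\dots,A_{m}\subseteq R$ contribute to the defining sum; thus $\jac(R)=\sum_{i=1}^{m}\kdiff{A_{i}}{R}$. Fix one such $A=A_{i}$. Because $R$ is finite over $A$ the extension is integral, so $\dim A=\dim R=d$; and since $A$ is noetherian of finite global dimension it is regular, giving $\gldim A=\dim A=d$. Theorem~\ref{th:qdiff-ann} applied to the $A$-algebra $R$ then yields
\[
\qdiff 0AR\cdot \Ext^{d+1}_{R}(M,N)=0
\]
for all $R$-modules $M,N$.

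Next I invoke Lemma~\ref{le:differents}: the ideals $\kdiff AR$ and $\qdiff 0AR$ agree up to radical in $R$. Because $R$ is noetherian, there is an integer $s_{i}\ges 1$ with $\kdiff{A_{i}}{R}^{s_{i}}\subseteq \qdiff 0{A_{i}}R$, and consequently $\kdiff{A_{i}}{R}^{s_{i}}$ annihilates $\Ext^{d+1}_{R}(-,-)$. Set $s\colonequals\ \sum_{i=1}^{m}(s_{i}-1)+1$. Expanding $\jac(R)^{s}$ multinomially, every summand is a product of $s$ factors drawn from $\kdiff{A_{1}}{R},\dots,\kdiff{A_{m}}{R}$, and by the pigeonhole principle at least one index $i$ must be chosen at least $s_{i}$ times; that product therefore lies in $\kdiff{A_{i}}{R}^{s_{i}}$ and so kills $\Ext^{d+1}_{R}(-,-)$. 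Hence $\jac(R)^{s}\cdot \Ext^{d+1}_{R}(-,-)=0$, as required.

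The proof is essentially a bookkeeping assembly of machinery that is already in place: the crucial input is Theorem~\ref{th:qdiff-ann}, whose derivation via derived Hochschild cohomology is where the real work was done. The only place one has to be slightly careful is the identification $\gldim A_{i}=d$ uniformly for \emph{every} Noether normalization $A_{i}$, so that the same degree $d+1$ works for all of them; this follows from the fact that finite extensions preserve Krull dimension together with the Auslander--Buchsbaum--Serre characterization of regularity. No obstacle of substance remains.
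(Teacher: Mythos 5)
Your proof is correct and follows essentially the same route as the paper's: reduce to finitely many Noether normalizations, apply Theorem~\ref{th:qdiff-ann} to get annihilation by $\qdiff 0{A_{i}}R$, pass to a power of $\kdiff{A_{i}}R$ via Lemma~\ref{le:differents}, and finish with the same pigeonhole count on $\jac(R)^{s}$. Your added justification that $\gldim A_{i}=\dim A_{i}=d$ uniformly (via integrality and Auslander--Buchsbaum--Serre) is a detail the paper leaves implicit, and it is accurate.
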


\begin{proof}
As noted before, there exist finitely many Noether normalizations, say $A_{1},\dots,A_{l}$, of $R$ such that $\jac(R)=\sum_{i}\kdiff {A_{i}}R$.
By Theorem~\ref{th:qdiff-ann},  for each $i$ the ideal $\qdiff 0{A_{i}}R$ is contained in the annihilator of $\Ext^{d+1}_{R}(-,-)$. Hence, by Lemma~\ref{le:differents}, there is an integer $n$ such that ${\kdiff {A_{i}}R}^{n}$ annihilates $\Ext^{d+1}_{R}(-,-)$. Thus the same is true of $\jac(R)^{(n-1)l+1}$.
\end{proof}

The following corollary contains the Jacobian criterion for smoothness; confer ~\cite{Matsumura89}*{Theorem~30.3} and Remark~\ref{re:singular-locus}. As will be clear from its proof, one can formulate and prove a similar statement for localizations of algebras of the type considered in Example~\ref{ex:complete-local}.

\begin{corollary}
\label{co:jac-djac}
Let $k$ be a field and $R$ an affine $k$-algebra of Krull dimension $d$. There then exists an integer $s$ such that for any localization $S$ of $R$, one has
\[
\fitt^{S}_{d}(\Omega_{S/k})^{s}\cdot \Ext^{d+1}_{S}(-,-)=0\,.
\]
Thus, if the $S$-module $\Omega_{S/k}$ is projective of rank $\leq d$, the $k$-algebra $S$ is essentially smooth.
\end{corollary}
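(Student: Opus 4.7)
The plan is to deduce the first claim from Theorem~\ref{th:jac-djac} by flat base change along $R\to S$, and then read off essential smoothness from the first claim by observing that projectivity of $\Omega_{S/k}$ forces the relevant Fitting ideal to be the unit ideal.

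For the annihilation statement, I would first combine Example~\ref{ex:affine-algebras}, which identifies $\jac(R)=\fitt^R_d(\Omega_{R/k})$, with Theorem~\ref{th:jac-djac}, to produce an integer $s$ (depending only on $R$) such that $\jac(R)^s\cdot\Ext^{d+1}_R(-,-)=0$. Next I would propagate this along the localization $R\to S$. Since localization is flat and commutes with the formation of K\"ahler differentials, and Fitting ideals commute with arbitrary base change, one has
\[
\fitt^S_d(\Omega_{S/k})=\fitt^R_d(\Omega_{R/k})\cdot S=\jac(R)\cdot S.
\]
Furthermore, for any $S$-modules $M$ and $N$, an $R$-projective resolution of $M$ becomes an $S$-projective resolution after tensoring with the flat $R$-algebra $S$, and this yields $\Ext^n_S(M,N)\cong\Ext^n_R(M,N)$. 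Combining these observations and tensoring the annihilation statement with $S$ delivers the desired identity.

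For the essential smoothness claim, suppose $\Omega_{S/k}$ is projective of rank at most $d$. Locally at each prime of $S$ the module is then free of rank at most $d$, so its $d$th Fitting ideal is locally the unit ideal, hence $\fitt^S_d(\Omega_{S/k})=S$. The first part of the corollary then yields $\Ext^{d+1}_S(-,-)=0$, so $\gldim S\le d$. Combined with $\dim S\le d$ and noetherianity of $S$, this forces $S$ to be regular. Projectivity of $\Omega_{S/k}$ together with regularity of $S$ is precisely what the classical Jacobian criterion requires in order to conclude that $S$ is essentially smooth over $k$; see \cite{Matsumura89}*{Theorem~30.3}.

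The heavy lifting has already been done in Theorem~\ref{th:jac-djac}, so the main obstacle here is purely bookkeeping: checking that each of $\Omega$, $\fitt_d$, and $\Ext^{\ges d+1}$ really does descend correctly along the flat map $R\to S$. All three facts are standard consequences of flatness of localization, so no new idea beyond what is already in the excerpt is needed.
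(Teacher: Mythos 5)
Your proof of the annihilation statement is correct and essentially reproduces the paper's argument: identify $\jac(R)$ with $\fitt^{R}_{d}(\Omega_{R/k})$ via Example~\ref{ex:affine-algebras}, invoke Theorem~\ref{th:jac-djac}, and transport the conclusion along $R\to S$ using $\Ext^{n}_{S}(M,N)\cong\Ext^{n}_{R}(M,N)$ for $S$-modules $M,N$ together with the compatibility of $\Omega$ and of Fitting ideals with localization. That part needs no changes.

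There is, however, a genuine gap in the final step of the smoothness claim. From $\Ext^{d+1}_{S}(-,-)=0$ you correctly deduce that $S$ is regular, but the assertion that ``projectivity of $\Omega_{S/k}$ together with regularity of $S$ is precisely what the classical Jacobian criterion requires'' is not a valid way to conclude. Over an imperfect field, a regular essentially-of-finite-type $k$-algebra with projective module of differentials need not be smooth: take $k=\mathbb{F}_{p}(t)$ and $S=k[x]/(x^{p}-t)$; then $S$ is a field, hence regular, and $\Omega_{S/k}$ is free of rank $1$, yet $S$ is not smooth over $k$. (This does not contradict the corollary, since there the rank is $1>d=0$, but it shows that ``regular plus $\Omega$ projective implies smooth'' is false as a general principle; and Matsumura's Theorem~30.3 demands that the rank be exactly $n-h$, not merely that $\Omega_{S/k}$ be projective of rank $\le d$, so it cannot be cited to close this step.) The missing ingredient is the one the paper supplies at exactly this point: the hypotheses --- $S$ a localization of an affine $k$-algebra of dimension $\le d$ with $\Omega_{S/k}$ projective of rank $\le d$ --- are stable under arbitrary extension of the ground field $k\to k'$, so the same argument applies to $S\otimes_{k}k'$ and shows it is regular for every such $k'$. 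That is geometric regularity, which is what essential smoothness means; regularity of $S$ itself is strictly weaker.
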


\begin{proof}
From Theorem~\ref{th:jac-djac} and the description of Jacobian ideals of affine algebras in Example~\ref{ex:affine-algebras} it follows that, for some integer $s$, one has
\[
\fitt^{R}_{d}(\Omega_{R/k})^{s}\cdot \Ext^{d+1}_{R}(-,-) = 0 \quad\text{on $\Mod R$.}
\]
Let $U$ be a multiplicatively closed subset of $R$ such that $S\cong U^{-1}R$.  Since every $S$-module can be realized as a localization at $U$ of an $R$-module, it follows that
\[
\fitt^{R}_{d}(\Omega_{R/k})^{s}\cdot \Ext^{d+1}_{S}(-,-) = 0 \quad\text{on $\Mod S$.}
\]
It remains to note that  $\fitt^{S}_{d}(\Omega_{S/k}) = U^{-1}\fitt^{R}_{d}(\Omega_{R/k})$.

Finally assume that the $S$-module $\Omega_{S/k}$  is projective of rank $r$ with $r\leq d$. By, for example, \cite{BrunsHerzog98}*{Proposition~.14.10}, the hypothesis of projectivity translates to the equality below
\[
 \fitt^{S}_{d}(\Omega_{S/k})\supseteq  \fitt^{S}_{r}(\Omega_{S/k})=S\,,
 \]
whereas the inclusion is standard. Therefore  $\Ext^{d+1}_{S}(-,-) = 0$ on $\Mod S$, by the already established part of the result. Thus $S$ has finite global dimension. It remains to note that the hypotheses remain unchanged under extension of the ground field.
\end{proof}

\begin{remark}
\label{re:singular-locus}
In the notation of the previous corollary, $R$ is isomorphic to $k[\ulx]/I$, where $\ulx$ is a finite set of $n$ indeterminates over $k$, and $I$ is an ideal in $k[\ulx]$. Write $S\cong U^{-1}R$ for some multiplicatively closed subset $U$ in $k[\ulx]$. Then, with $h$ the height of $U^{-1}I$ in the ring $U^{-1}k[\ulx]$, the Jacobian criterion in~\cite{Matsumura89}*{Theorem~30.3} states that the $k$-algebra $S$ is smooth if the $S$-module $\Omega_{S/k}$ is projective of rank $n-h$.  Observe that $n-h\leq \dim R$, since $h$ is at least the height of $I$, and that the inequality can be strict. Thus, Corollary~\ref{co:jac-djac} offers a slight improvement on the result from \cite{Matsumura89}.
\end{remark}

Regarding Theorem~\ref{th:jac-djac}, a natural problem is to find upper bounds for the integer $s$; in particular, to understand when (not `if': see Section~\ref{se:examples}) one may take $s=1$. Theorem~\ref{th:half-CM} describes one such family of examples. Its proof requires the following result. Recall that a noetherian ring $R$ is said to be \emph{equidimensional} when the Krull dimension of $R/\fq$ remains the same as $\fq$ varies over the minimal primes $R$, and is finite.

\begin{lemma}
\label{lem:tor-independence}
Let $A\subseteq R$ be a module-finite extension of rings, where $A$ is a noetherian normal ring, $R$ is equidimensional and of finite projective dimension over $A$. 

If $ 2\, \depth R_{\fp}\ge \dim R_{\fp}$ for  each $\fp\in\Spec R$, then $\Tor^{A}_{i}(R,R)=0$ for $i\ge 1$.
\end{lemma}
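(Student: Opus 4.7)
The plan is to reduce to the case when $(A, \fm)$ is a Noetherian normal local ring and induct on $n = \dim A$. First I would check that all hypotheses descend to each localization $A_\fq \subseteq R_\fq$: module-finiteness, normality of $A_\fq$, and finite projective dimension of $R_\fq$ are standard, and equidimensionality of $R_\fq$ follows because each minimal prime $\fp$ of $R$ satisfies $\dim R/\fp = \dim R = \dim A$, so $R/\fp$ being integral over $A/(\fp \cap A)$ forces $\fp \cap A = (0)$, giving $\dim R_\fq/\fp R_\fq = \dim A_\fq$. The base case $n = 0$ is immediate: $A$ is a field, so $R$ is $A$-flat.

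For the inductive step, the induction hypothesis applied at each non-maximal $\fq \in \Spec A$ gives $\Tor^{A_\fq}_i(R_\fq, R_\fq) = 0$ for $i \ges 1$, and flat base change along $R_\fq \to R_\fr$ then yields $\Tor^A_i(R, R)_\fr = 0$ at every prime $\fr$ of $R$ whose contraction to $A$ is not $\fm$. So for $i \ges 1$ the module $\Tor^A_i(R, R)$ is supported on the finite set of maximal ideals of $R$ lying over $\fm$, hence has finite length. To unlock the half-CM hypothesis, I would next show that $R$ is $A$-torsion-free: any associated prime $\fp$ of $R$ as $R$-module has $\depth R_\fp = 0$, which forces $\dim R_\fp = 0$ by the depth bound, so $\fp$ is minimal in $R$ and $\fp \cap A = (0)$ by the discussion above. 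Going-down now applies and yields $\dim R_\fp = \dim A = n$ for every maximal $\fp$ of $R$, whence $\depth R_\fp \ges n/2$ and so $\depth_\fm R \ges n/2$.

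To conclude, apply the derived Auslander--Iyengar depth formula (using that $R$ has finite projective dimension over $A$):
\[
\depth_A(R \lotimes A R) \;=\; 2\depth_\fm R - \depth A \;\ges\; n - n \;=\; 0.
\]
Then run the hyperhomology spectral sequence $E_2^{p,q} = H^p_\fm(H^q(R \lotimes A R)) \Rightarrow H^{p+q}_\fm(R \lotimes A R)$. For $q < 0$ the cohomology $H^q(R \lotimes A R) = \Tor^A_{-q}(R, R)$ has finite length, so $H^p_\fm$ applied to it equals that module at $p = 0$ and vanishes otherwise; moreover the differentials into or out of each $E_r^{0, q}$ vanish (their targets lie in higher local cohomology of finite-length modules, and their sources lie in strictly negative columns). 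Thus $H^q_\fm(R \lotimes A R) \cong \Tor^A_{-q}(R, R)$ for each $q < 0$, and the depth bound forces each of these to be zero, giving $\Tor^A_i(R, R) = 0$ for $i \ges 1$. The main technical point is justifying the derived depth formula in the form used; once that is in hand, the rest is a clean combination of incomparability/going-down with the half-CM estimate and the collapse of the spectral sequence.
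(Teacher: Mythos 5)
Your argument is correct, and it reaches the conclusion by a genuinely different homological route than the paper, though the geometric core is the same. Both proofs ultimately rest on the estimate $2\,\depth_{A_{\fq}}R_{\fq}\ge \dim A_{\fq}$ at a suitable localization, obtained exactly as you do it: equidimensionality forces every minimal prime $\fp$ of $R$ to contract to $(0)$ in the normal domain $A$, and going-down applied to $A\subseteq R/\fp$ gives $\dim R_{\fn}=\dim A$ for maximal $\fn$, whence $\depth_{A}R\ge \tfrac12\dim A$. (Your torsion-freeness digression via associated primes is not actually needed for this; contraction of minimal primes to $(0)$ already suffices, and going-down should be applied to $A\subseteq R/\fp$ rather than to $A\subseteq R$ itself.) Where you diverge is in how this depth estimate is converted into vanishing of $\Tor^{A}_{i}(R,R)$: the paper localizes once, at a prime $\fq$ minimal in the support of the top nonvanishing $\Tor^{A}_{s}(R,R)$, so that this module has depth zero over $A_{\fq}$, and then quotes Auslander's theorem \cite{Auslander61}*{Theorem 1.2} together with Auslander--Buchsbaum to get $-s=2\,\depth_{A_{\fq}}R_{\fq}-\depth A_{\fq}\ge 0$ directly. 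You instead run an induction on $\dim A$ to make all positive-degree Tor modules have finite length, apply the derived depth formula of Foxby--Iyengar (valid here since $R$ has finite flat dimension over $A$) to get $\depth_{A}(R\lotimes{A}R)\ge 0$, and collapse the local cohomology spectral sequence to identify $H^{-i}_{\fm}(R\lotimes{A}R)$ with $\Tor^{A}_{i}(R,R)$ for $i\ge 1$. This is all sound --- the spectral sequence degeneration you describe does hold because the off-column terms are higher local cohomology of finite-length modules --- but it is more machinery than necessary: localizing at a minimal prime of the support of the top Tor achieves ``depth zero of the top Tor'' without any induction, and your spectral-sequence step is in effect a proof of the case of Auslander's theorem that the paper simply cites. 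What your approach buys is independence from Auslander's classical result, at the cost of needing the derived depth formula, which is the one external input you would have to justify carefully.
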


\begin{proof}
Let $s=\sup \Tor^{A}_{*}(R,R)$ and pick a prime $\fq\in \Spec A$ that is minimal in the support of the $A$-module $\Tor^{A}_{s}(R,R)$. From  \cite{Auslander61}*{Theorem 1.2} one gets the equality below
\[
-s = 2\, \depth_{A_{\fq}} R_{\fq} - \depth A_{\fq} \geq 2\, \depth_{A_{\fq}} R_{\fq} - \dim A_{\fq} \,.
\]
Note that $R_{\fq}$ denotes $R\otimes_{A}A_{\fq}$. The desired result thus follows from the following claim.

\begin{claim}
Let $A\subseteq R$ be a module-finite extension of rings with $A$ local normal, and $R$ equidimensional. If $ 2\, \depth R_{\fn}\ge  \dim R_{\fn}$ for each maximal ideal $\fn$ in $R$, then $2\, \depth_{A}R \geq \dim A$.
\end{claim}

Let $\fm$ be the maximal ideal of $A$. Since $\mcV(\fm R)$ consists of the maximal ideals of $R$, it follows from \cite{BrunsHerzog98}*{Proposition~1.2.10(a)} that there exists a maximal ideal $\fn$ of $R$ such that $\depth_{R}(\fm R, R) = \depth R_{\fn}$. This justifies the first two equalities below
\[
2\, \depth_{A}R = 2\, \depth_{R}(\fm R, R) = 2\, \depth R_{\fn} \geq  \dim R_{\fn} = \dim A\,.
\]
The inequality holds by hypothesis, and the last inequality holds because $A$ is a normal domain and $R$ is a equidimensional and a module-finite extension of $A$.  The last assertion essentially follows from the going-down theorem, \cite{Matsumura89}*{Theorem~9.4}. To elaborate: Fix a minimal prime ideal $\fp$ of $R$ contained in $\fn$. We claim that $\fp\cap A=(0)$. Indeed, the induced map $A/(\fp\cap A)\subseteq R/\fp$ is also a module-finite extension, so one gets the first equality below
\[
\dim\, (A/(\fp\cap A)) = \dim\, (R/\fp) = \dim R = \dim A\,.
\]
The second one holds because $R$ is equidimensional (and this is the only place this is needed), and the last one holds because $A\subseteq R$ is module-finite. Since $A$ is a domain, we conclude that $\fp\cap A=(0)$, as claimed.

Now consider the  module-finite ring extension $A\subseteq R/\fp$ is of domains, with $A$ a normal local. Set $\fm=\fn\cap A$; this is the maximal ideal of $A$. Now we apply \cite{Matsumura89}*{Exercises 9.8 \& 9.9}, which need the normality of $A$, to deduce that the height of the ideal $\fn/\fp$ in the ring $R/\fp$ is equal to the height of $\fm$, that is to say, to $\dim A$. Thus one gets that
\[
\dim R_{\fn}\geq \height\, (\fn/\fp) = \dim A = \dim R\geq \dim R_{\fn}\,.
\]
This justifies the stated equality.
\end{proof}

The result below justifies Theorem~\ref{th:main2},

\begin{theorem}
\label{th:half-CM}
Let $R$ be a commutative noetherian ring of Krull dimension $d$. If $R$ is equidimensional and  each $\fp\in\Spec R$ satisfies $ 2\, \depth R_{\fp}\ge \dim R_{\fp}$, then  
\[
\jac(R)\cdot \Ext^{d+1}_{R}(-,-)=0\,.
\]
\end{theorem}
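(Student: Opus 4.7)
The plan is to reduce the statement to showing, for each Noether normalization $A\subseteq R$ contributing to $\jac(R)$, that $\kdiff A R$ annihilates $\Ext^{d+1}_R(-,-)$; since $\jac(R)=\sum_A \kdiff A R$ is a finite sum, this will suffice.

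Fix such a Noether normalization $A$. Being of finite global dimension, $A$ is regular, hence normal, and satisfies $\gldim A = \dim A = \dim R = d$. By Theorem~\ref{th:qdiff-ann}, the derived Noether different $\qdiff 0 A R$ annihilates $\Ext^{d+1}_R(M,N)$ for all $R$-modules $M,N$. From \eqref{eq:ndiff-kdiff} we have $\kdiff A R\subseteq \ndiff A R$, and Lemma~\ref{le:qdiff-ndiff} gives the equality $\qdiff 0 A R = \ndiff A R$ as soon as $\Tor^A_i(R,R)=0$ for $i\ge 1$. So the theorem reduces to this Tor-vanishing.

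This is precisely where Lemma~\ref{lem:tor-independence} enters. Its hypotheses are met in the present situation: $A$ is noetherian normal, as noted above; $R$ has finite projective dimension over $A$ because it is a finitely generated module over a ring of finite global dimension; $R$ is equidimensional by assumption; and the inequality $2\,\depth R_\fp \ge \dim R_\fp$ holds at every $\fp\in\Spec R$ by hypothesis. The lemma therefore yields $\Tor^A_i(R,R)=0$ for all $i\ge 1$.

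Chaining the above, $\kdiff A R \subseteq \ndiff A R = \qdiff 0 A R$, and so $\kdiff A R\cdot \Ext^{d+1}_R(-,-)=0$. Summing over the Noether normalizations gives $\jac(R)\cdot \Ext^{d+1}_R(-,-)=0$, as claimed. The genuine obstacle has already been surmounted in Lemma~\ref{lem:tor-independence}, whose proof uses the Auslander depth formula together with the going-down theorem on the equidimensional module-finite extension $A\subseteq R$; everything else in the present proof is an assembly of the machinery developed in Sections~\ref{se:de-ndiff} and \ref{se:jacobian}.
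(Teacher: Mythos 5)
Your proposal is correct and takes essentially the same route as the paper's proof: Lemma~\ref{lem:tor-independence} supplies $\Tor^{A}_{i}(R,R)=0$ for $i\ge 1$, Lemma~\ref{le:qdiff-ndiff} then upgrades $\qdiff 0AR\subseteq\ndiff AR$ to an equality, and Theorem~\ref{th:qdiff-ann} together with $\kdiff AR\subseteq\ndiff AR$ finishes the argument. You merely spell out the hypothesis-checking and the reduction to a single Noether normalization, which the paper's three-line proof leaves implicit.
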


\begin{proof}
Lemma~\ref{lem:tor-independence} yields $\Tor^{A}_{i}(R,R) =0$ for $i\ge 1$ and hence, by Lemma~\ref{le:qdiff-ndiff}, there is an equality $\ndiff AR = \qdiff 0AR$. Theorem~\ref{th:qdiff-ann} now yields the desired inclusion.
\end{proof}

The following special case of Theorem~\ref{th:half-CM} seems worth recording.

\begin{corollary}
If a local ring $R$ is equidimensional, locally Cohen-Macaulay on the punctured spectrum, and $2\,\depth R\geq \dim R$, then $\jac(R)$ annihilates $\Ext^{d+1}_{R}(-,-)$ for $d=\dim R$.
\end{corollary}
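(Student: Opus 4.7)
The plan is simply to verify that the hypotheses of Theorem~\ref{th:half-CM} are met under the stated assumptions, and then invoke that theorem directly. The equidimensionality of $R$ is given, so the remaining task is to check the depth-dimension inequality $2\,\depth R_{\fp}\geq \dim R_{\fp}$ at every $\fp\in\Spec R$.

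I would split the verification of this inequality into two cases according to whether $\fp$ is the maximal ideal $\fm$ of $R$ or not. If $\fp=\fm$, the inequality is exactly the hypothesis of the corollary. If $\fp\neq \fm$, then $\fp$ belongs to the punctured spectrum, so by assumption $R_{\fp}$ is Cohen-Macaulay; in particular, $\depth R_{\fp}=\dim R_{\fp}$, which trivially gives $2\,\depth R_{\fp}\geq\dim R_{\fp}$. (Here the dimensions involved are all nonnegative, so doubling only helps.)

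Once this pointwise inequality has been established, the hypotheses of Theorem~\ref{th:half-CM} hold verbatim, and its conclusion
\[
\jac(R)\cdot \Ext^{d+1}_{R}(-,-)=0
\]
is exactly what we want. No additional obstacle is anticipated — the work has already been done in Theorem~\ref{th:half-CM}, and the role of this corollary is essentially to repackage the hypothesis into the commonly occurring situation where one only needs to inspect the maximal ideal (since the generic behaviour on the punctured spectrum is automatically controlled by the Cohen-Macaulay assumption).
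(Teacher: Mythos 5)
Your proposal is correct and is exactly the argument the paper intends: the corollary is stated as an immediate special case of Theorem~\ref{th:half-CM}, and your case split (the maximal ideal handled by the stated hypothesis, the punctured spectrum by Cohen--Macaulayness giving $\depth R_{\fp}=\dim R_{\fp}$) is the routine verification the authors leave implicit.
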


The hypotheses of the preceding result seem close to optimal. Indeed, in Example~\ref{ex:isolated} we describe a reduced isolated singularity $R$, with $\depth R = 1$ and $\dim R=2$, for which the conclusion of the corollary does not hold. The ring $R$ is not equidimensional!

\section{The singularity category}
In this section we reinterpret results from Sections~\ref{se:de-ndiff} and ~\ref{se:jacobian} in terms of annihilators of singularity categories.
Again with an eye towards later applications, we revert to the more general setting of general (meaning, not necessarily commutative) Noether algebras. 

Let $A$ be a noetherian commutative ring and $\Lambda$ a Noether $A$-algebra. We write $\dbcat(\mod \Lambda)$ for the bounded derived category of $\mod \Lambda$. The \emph{singularity category}, also known as the \emph{stable derived category}, of $\Lambda$ is the Verdier quotient
\[
\dsing(\Lambda)\colonequals\ \dbcat(\mod \Lambda)/\thick(\Lambda)\,,
\]
where $\thick(\Lambda)$ is the subcategory of perfect complexes; see \cite{Buchweitz87}. The singularity category inherits a structure of a triangulated category from $\dbcat(\mod\Lambda)$, with suspension the usual shift functor, $\susp$, on complexes. In what follows, the morphisms between complexes $M,N$ in $\dsing(\Lambda)$ is denoted $\Hom_{\dsing}(M,N)$.

The action of the derived Hochschild cohomology algebra  on $\dbcat(\mod\Lambda)$, described in Section~\ref{se:de-ndiff}, induces an action on $\dsing(\Lambda)$. We say that an element $\alpha$ in $\Ext^{*}_{\denv A}(A,A)$ annihilates $\dsing(\Lambda)$ if for all complexes $M,N$ of $\Lambda$-modules and $n\in\bbZ$ one has
\[
\alpha\cdot \Hom_{\dsing}(M,\susp^{n}N) = 0\,. 
\]
Equivalently, the image of the morphism $M\to \susp^{s}M$, where $s=|\alpha|$, is zero in $\dsing(\Lambda)$.

\begin{proposition}
\label{pr:dsing-annihilator}
When $A$ is regular, the ideal $\qdiff *A{\Lambda}$ annihilates $\dsing(\Lambda)$.
\end{proposition}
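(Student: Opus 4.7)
The plan is to show that for every integer $s$, every $\alpha \in \qdiff{s}{A}{\Lambda}$, and every $N \in \dbcat(\mod\Lambda)$, the morphism $\alpha_N \colon \susp^{-s}N \to N$ induced by the action of $\alpha$ factors, in $\dbcat(\mod\Lambda)$, through a perfect $\Lambda$-complex. By the reformulation of annihilation noted just before the proposition, this is exactly what needs to be verified.

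To construct such a factorization, I would begin from diagram~\eqref{eq:factorization}, which presents $\alpha$ in the derived category of DG $\denv\Lambda$-modules as a composite $\susp^{-s}\Lambda \xra{\eta} \denv\Lambda \xra{\mu\sfh} \Lambda$. Apply $-\lotimes{\Lambda} N$, where the right $\Lambda$-action on $\denv\Lambda = \Lambda \lotimes{A} \opp\Lambda$ is via the opposite factor: associativity of the derived tensor product together with the identification $\opp\Lambda \lotimes{\Lambda} N \simeq N$ yields $\denv\Lambda \lotimes{\Lambda} N \simeq \Lambda \lotimes{A} N$ as a left $\Lambda$-complex, and the resulting composite $\susp^{-s}N \to \Lambda \lotimes{A} N \to N$ agrees with $\alpha_N$, by the description of the action given immediately before Theorem~\ref{th:qdiff-ann}.

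It then remains to check that $\Lambda \lotimes{A} N$ lies in $\thick(\Lambda)$. Regularity of $A$, together with the finiteness of $\dim A$ that is implicit in the setting, ensures that any object of $\dbcat(\mod A)$ has finite projective dimension; hence $N$ is quasi-isomorphic, as an $A$-complex, to a bounded complex $P^{\bullet}$ of finitely generated projective $A$-modules. Each $\Lambda \otimes_A P^{i}$ is a summand of a finite free $\Lambda$-module, hence finitely generated projective over $\Lambda$, and so $\Lambda \lotimes{A} N \simeq \Lambda \otimes_A P^{\bullet}$ is perfect. The only real obstacle is the bimodule bookkeeping in the second step, needed to push $-\lotimes{\Lambda} N$ through the factorization of $\alpha$ and confirm that the composite really is $\alpha_N$; the perfectness step is then essentially immediate.
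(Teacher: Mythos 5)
Your argument is correct and is essentially the proof the paper gives: one applies $-\lotimes{\Lambda}N$ to the factorization \eqref{eq:factorization}, identifies the middle term $\denv\Lambda\lotimes{\Lambda}N$ with $\Lambda\lotimes{A}N$, and observes that this is a perfect complex of $\Lambda$-modules because $N$ restricted to $A$ is perfect by regularity (and finite global dimension) of $A$. Your extra care with the bimodule bookkeeping only makes explicit what the paper leaves implicit.
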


\begin{proof}
The argument is akin to that for Lemma~\ref{le:qdiff-ann}, but a tad simpler so bears repeating. Fix an element $\alpha$ in $\qdiff sA\Lambda$ and a complex $M$ in $\dbcat(\mod \Lambda)$. Applying $-\lotimes{A}M$ to \eqref{eq:factorization} yields a commutative diagram 
\[
\xymatrix{
M\ar@{->}[rr]^{\alpha}\ar@{-->}[dr]_{\eta}&& \susp^{s} M \\
&\susp^{s}\Lambda\lotimes{A} M \ar@{->}[ur]_{\mu\sfh}&
}
\] 
of morphisms in $\dbcat(\mod\Lambda)$. It remains to observe that viewed as a complex of $A$-modules, $M$ is in $\dbcat(\mod A)$ and hence perfect; the latter conclusion holds because the global dimension of $A$ is finite. Thus, the complex of $\Lambda$-modules $\Lambda\lotimes{A} M$ is perfect, and hence zero as an object of $\dsing(\Lambda)$. It follows that $\alpha$ annihilates $\dsing(\Lambda)$.
\end{proof}

\begin{theorem}
Let $R$ be a commutative noetherian ring. Then $\jac(R)^{s} \cdot \dsing(R) = 0$ for some integer $s$; moreover, 
$s=1$ suffices if $R$ is equidimensional and \ $2\,\depth R_{\fp}\ge \dim R_{\fp}\ $ for each $\fp$ in $\Spec R$. \qed
\end{theorem}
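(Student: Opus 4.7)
The plan is to mirror the proofs of Theorems~\ref{th:jac-djac} and \ref{th:half-CM}, replacing Theorem~\ref{th:qdiff-ann} by the singularity-category version already established as Proposition~\ref{pr:dsing-annihilator}. Since $R$ is noetherian, one fixes finitely many Noether normalizations $A_{1},\dots,A_{l}\subseteq R$ with $\jac(R)=\sum_{i=1}^{l}\kdiff{A_{i}}R$; each $A_{i}$, being noetherian of finite global dimension, is regular, which is the hypothesis required by Proposition~\ref{pr:dsing-annihilator}.

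For the first assertion, apply Proposition~\ref{pr:dsing-annihilator} to each $A_{i}$ to obtain that $\qdiff *{A_{i}}R$ annihilates $\dsing(R)$. Lemma~\ref{le:differents} gives $\kdiff{A_{i}}R\subseteq \sqrt{\qdiff 0{A_{i}}R}$, so some power $\kdiff{A_{i}}R^{n_{i}}$ is already contained in $\qdiff 0{A_{i}}R$ and hence annihilates $\dsing(R)$. Taking $n=\max_{i} n_{i}$ and arguing exactly as in the proof of Theorem~\ref{th:jac-djac}, one concludes $\jac(R)^{(n-1)l+1}\cdot \dsing(R)=0$, which gives the first claim.

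For the refined statement, assume $R$ is equidimensional and satisfies $2\,\depth R_{\fp}\ge \dim R_{\fp}$ for every $\fp\in\Spec R$. Each $A_{i}$ is regular, hence normal, and $R$ is finitely generated over $A_{i}$ and of finite projective dimension over it, so Lemma~\ref{lem:tor-independence} yields $\Tor^{A_{i}}_{j}(R,R)=0$ for $j\ge 1$. Lemma~\ref{le:qdiff-ndiff} then upgrades the comparison between the differents to $\kdiff{A_{i}}R\subseteq \ndiff{A_{i}}R = \qdiff 0{A_{i}}R$, so Proposition~\ref{pr:dsing-annihilator} gives $\kdiff{A_{i}}R\cdot \dsing(R)=0$ for each $i$, without any passage to a power. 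Summing over $i$ yields $\jac(R)\cdot \dsing(R)=0$.

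No step of this argument is a genuine obstacle; the entire proof is a transcription of those of Theorems~\ref{th:jac-djac} and \ref{th:half-CM} through the singularity-category machinery of Proposition~\ref{pr:dsing-annihilator}. The one point worth verifying is that the definition of a Noether normalization used here (noetherian of finite global dimension) forces regularity of each $A_{i}$, which is precisely the hypothesis needed to invoke Proposition~\ref{pr:dsing-annihilator}.
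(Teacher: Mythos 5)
Your argument is correct and is exactly the proof the paper intends (the theorem is stated with an immediate \qed precisely because it follows by running the proofs of Theorems~\ref{th:jac-djac} and \ref{th:half-CM} with Proposition~\ref{pr:dsing-annihilator} in place of Theorem~\ref{th:qdiff-ann}). Your side remark that a noetherian ring of finite global dimension is regular, hence normal, is also the right justification for invoking Proposition~\ref{pr:dsing-annihilator} and Lemma~\ref{lem:tor-independence}.
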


\begin{remark}
When $k$ is a field and $\Lambda$ is a finite dimensional $k$-algebra, Proposition~\ref{pr:dsing-annihilator} identifies an ideal of $\HCH *k{\Lambda}$, the usual Hochschild cohomology algebra of $\Lambda$ over $k$, the annihilates $\dsing(\Lambda)$.  One can go a bit further,  at least for commutative rings. 

Namely, suppose  $k$ is a field and $R$ is a commutative $k$-algebra. Let $A\subseteq R$ be a Noether normalization, with $A$ a $k$-algebra. One has then a canonical morphism of graded $k$-algebras $\Ext^{*}_{\denv R}(R,R)\to \HCH *kR$, and we consider the ideal generated by the image of $\qdiff *AR$ under this map. Taking the sum over all such Noether normalizations $A$ yields an ideal in  $\HCH *kR$ that annihilates $\dsing (R)$. It seems  worthwhile to investigate this ideal. 
\end{remark}

\section{Examples}
\label{se:examples}
In this section we collect some examples that complement the results in Section~\ref{se:jacobian}. The first one illustrates that the Jacobian ideal of a commutative ring $R$ of Krull dimension $d$ need not annihilate $\Ext^{d+1}_{R}(-,-)$; confer Theorems~\ref{th:jac-djac} and \ref{th:half-CM}.

\begin{example}
\label{ex:first}
Let $k$ be a field and set $R = k[[x, y]]/(x^{5}, xy)$. Thus, $R$ is a  one-dimensional complete equicharacteristic local ring, with a unique minimal prime, namely, the ideal $(x)$. It is easily verified that the Jacobian ideal of $R$ is equal to the maximal ideal, $\fm=(x,y)$. 

We claim that $x\cdot \Ext^{2}_{R}(M,M)\ne 0$  for the $R$-module $M = R/x^{3}R$.

Indeed, the minimal free resolution of $M$ is
\[
0\lla R \xla{\ x^{3}\ } R \xla{\ [x^{2}, \ y]\ } R^{2} \xla{\bmatrix x^{3} & y & 0 \\ 0 & 0 & x \endbmatrix} R^{3}\lla \cdots
\]
Thus,  $\Ext^{2}_{R}(M,M)$ is the second cohomology of the complex
\[
0\lra M \xra{\ 0 \ } M  \xra{\bmatrix x^{2} \\  y\endbmatrix\ } M^{2} \xra{\bmatrix  0 & 0 \\ y & 0 \\ 0 & x  \endbmatrix} M^{3}\lra \cdots
\]
Evidently, the element $\xi = [x,0]$ in $M^{2}$ is a cycle. However, $x\xi$ is not a boundary element: if there exists an $f$ in $R$ such that $[x^{2},0] = [x^{2}f,yf]$ in $M^{2}$, then $x^{2}(1-f)=yf = 0$ in $M$, and this is not the case, as can be easily verified.
\end{example}

In the preceding example, the ring $R$ is equidimensional, but does not satisfy the condition on depths required to apply Theorem~\ref{th:half-CM}. In the one below, the depth condition holds, but the ring is not equidimensional.

\begin{example}
\label{ex:isolated}
Set $R =\bbC[[x,y,z]]/(xy, x(x^4-z^4))$. It is easy to check that $\jac(R)=(x,y,z^4)$. We claim the following statements hold.
\begin{enumerate}[{\quad\rm(1)}]
\item $\dim R = 2$ and $\depth R = 1$.
\item $R$ is reduced and an isolated singularity, but $R$ is not equidimensional.
\item $x\cdot \Ext^{3}_{R}(R/I,I)\ne 0$ where $I=(x^3,z)R$.  Thus  $\jac(R)$ does not annihilate $\Ext^{3}_{R}(-,-)$.
\end{enumerate}

Indeed, a primary decomposition of $(0)$ is given by
\[
(0) =  (x)\cap (x+z,y)\cap (x-z,y)\cap (x+iz,y)\cap (x-iz,y)
\]
In particular, $z$ is not a zerodivisor in $R$. The ring $S=R/zR$ is isomorphic to $k[[x,y]]/(x^5,xy)$, considered in Example~\ref{ex:first}. This will be used in the arguments. 

(1) This is clear, since $\dim S = 1$ and $\depth S=0$.

(2) Since $\sqrt{\jac(R)}=(x,y,z)$, the ring $R$ has an isolated singularity; one can check this directly or use Remark~\ref{re:singular-locus}. As $\depth R=1$ the ring $R$ satisfies Serre's condition $(S_{1})$ and so is reduced. From the primary decomposition of $(0)$ it is easy to verify that $R$ is not equidimensional; indeed $\dim R/(x)=2$ but $\dim R/\fp=1$ for any other minimal prime $\fp$.

(3) Since $R/I=S/x^3S$, it follows from  that that $x$ does not annihilate $\Ext_{S}^2(R/I,R/I)$. Consider the following presentation of the ideal $I$:
\[
0\lla I \lla R^{2}\xla{\begin{bmatrix} z & x^{2} & y \\ -x^{3} & -xz^{3} & 0 \end{bmatrix}} R^{3}
\]
Tensoring this with $R/I$ gives an exact sequence of $S$-modules
\[
0 \lla I/I^{2}\lla (R/I)^{2}\xra{\begin{bmatrix} 0 & x^{2} & y \\ 0 & 0 & 0 \end{bmatrix}} (R/I)^{3} 
\]
It follows that $I/I^{2}$ is isomorphic to the direct sum of $R/I$ and  $R/(I+(x^2,y))$. Since $I^2=zI$, the module $R/I$ is a direct summand of $I/zI$, and hence we deduce that $\Ext_S^{2}(R/I,R/I)$ is a direct summand of $\Ext_{S}^{2}(R/I,I/zI)$. It remains to note that this last module is isomorphic to $\Ext_{R}^{3}(R/I,I)$, by Rees' Theorem~\cite[Lemma~3.1.16]{BrunsHerzog98}, and hence $x\cdot \Ext_R^3(R/I,I)$ is nonzero.
\end{example}

The preceding examples can be used to show that the differents encountered in Section~\ref{se:jacobian}, namely, the K\"ahler, the Noether, and the derived Noether, can be different.

\begin{example}
\label{ex:kdiff-qdiff}
Let $R$ be as in Example~\ref{ex:first} and set $A:=k[[x+y]]$; this is a Noether normalization of $R$.  Then $y-x$ is in $\kdiff AR$ but not in $\qdiff 0AR$, so that $\kdiff AR\not\subseteq\qdiff 0AR$. 

Indeed, a direct computation yields  $\kdiff AR=(5x^{4},y-x)$. With $M$ as in Example~\ref{ex:first}, it is easy to verify (from the discussion in that example) that $y$ annihilates $\Ext^{2}_{R}(M,M)$; since $x$ does not, it follows that neither does $y-x$, and hence that  $y-x$ is not in $\qdiff 0AR$.

The K\"ahler different is always contained in the Noether different and hence, in this example, the inclusion $\qdiff 0AR\subseteq \ndiff AR$ is strict.
\end{example}

The next example, from \cite[pp.~102]{SchejaStorch73}, illustrates that the K\"ahler different can be smaller than the (derived) Noether different.

\begin{example}
\label{ex:qdiff-kdiff}
Let $R=k[x,y,z]/(x^{2}-y^{2},x^{2}-z^{2},xy,xz,yz)$, where $k$ is a field of characteristic zero. Evidently, $\qdiff 0kR=\ndiff kR$.
One can check directly that $\ndiff kR=(x^{2})$ whereas $\kdiff kR=0$.
\end{example}

\subsection*{Acknowledgements}
Part of this work was done at the American Institute of Mathematics, whilst the authors were part of a SQuaRE program. We thank the AIM
for providing a congenial atmosphere for such endeavors. In addition, SBI was partly supported by NSF grant DMS-1503044; RT was partly supported by  JSPS Grant-in-Aid for Scientific Research 25400038 and 16K05098.

\begin{bibdiv}
\begin{biblist}

\bib{Auslander61}{article}{
author={Auslander, M.},
   title={Modules over unramified regular local rings},
   journal={Illinois J. Math.},
   volume={5},
   date={1961},
   pages={631--647},
   issn={0019-2082},
   review={\MR{0179211}},
   }
   
\bib{AuslanderGoldman60}{article}{
   author={Auslander, M.},
   author={Goldman, O.},
   title={The Brauer group of a commutative ring},
   journal={Trans. Amer. Math. Soc.},
   volume={97},
   date={1960},
   pages={367--409},
   issn={0002-9947},
   review={\MR{0121392 (22 \#12130)}},
}

\bib{AuslanderRim63}{article}{
   author={Auslander, M.},
   author={Rim, D. S.},
   title={Ramification index and multiplicity},
   journal={Illinois J. Math.},
   volume={7},
   date={1963},
   pages={566--581},
   issn={0019-2082},
   review={\MR{0155853}},
}

\bib{AvramovIyengarLipmanNayak10}{article}{
   author={Avramov, Luchezar L.},
   author={Iyengar, Srikanth B.},
   author={Lipman, Joseph},
   author={Nayak, Suresh},
   title={Reduction of derived Hochschild functors over commutative algebras
   and schemes},
   journal={Adv. Math.},
   volume={223},
   date={2010},
   number={2},
   pages={735--772},
   issn={0001-8708},
   review={\MR{2565548}},
   doi={10.1016/j.aim.2009.09.002},
}

\bib{BrunsHerzog98}{book}{ 
   author={Bruns, Winfried},
   author={Herzog, J{\"u}rgen},
   title={Cohen-Macaulay rings},
   series={Cambridge Studies in Advanced Mathematics},
   volume={39},
   edition={2},
   publisher={Cambridge University Press, Cambridge},
   date={1998},
   pages={xii+403},
   isbn={0-521-41068-1},
   review={\MR{1251956}},
}

\bib{Buchweitz87}{article}{
author= {Buchweitz, R.-O.},
title = {Maximal Cohen-Macaulay modules and Tate-cohomology over Gorenstein rings},
date={1987},
status={preprint},
eprint={https://tspace.library.utoronto.ca/handle/1807/16682},
}

\bib{IyengarTakahashi14a}{article}{
   author={Iyengar, Srikanth B.},
   author={Takahashi, Ryo},
   title={Annihilation of cohomology and decompositions of derived
   categories},
   journal={Homology Homotopy Appl.},
   volume={16},
   date={2014},
   number={2},
   pages={231--237},
   issn={1532-0073},
   review={\MR{3263893}},
   doi={10.4310/HHA.2014.v16.n2.a12},
}

\bib{IyengarTakahashi16a}{article}{
   author={Iyengar, Srikanth B.},
   author={Takahashi, Ryo},
   title={Annihilation of cohomology and strong generation of module
   categories},
   journal={Int. Math. Res. Not. IMRN},
   date={2016},
   number={2},
   pages={499--535},
   issn={1073-7928},
   review={\MR{3493424}},
   doi={10.1093/imrn/rnv136},
}

\bib{Matsumura89}{book}{
   author={Matsumura, H.},
   title={Commutative ring theory},
   series={Cambridge Studies in Advanced Mathematics},
   volume={8},
   edition={2},
   note={Translated from the Japanese by M. Reid},
   publisher={Cambridge University Press},
   place={Cambridge},
   date={1989},
   pages={xiv+320},
   isbn={0-521-36764-6},
   review={\MR{1011461 (90i:13001)}},
}

\bib{Quillen:1968a}{article}{
   author={Quillen, Daniel},
   title={On the (co-) homology of commutative rings},
   conference={
      title={Applications of Categorical Algebra},
      address={Proc. Sympos. Pure Math., Vol. XVII, New York},
      date={1968},
   },
   book={
      publisher={Amer. Math. Soc., Providence, R.I.},
   },
   date={1970},
   pages={65--87},
   review={\MR{0257068}},
}

\bib{SchejaStorch73}{book}{
   author={Scheja, G.},
   author={Storch, U.},
   title={Lokale Verzweigungstheorie},
   language={German},
   note={Vorlesungen \"uber Kommutative Algebra (Wintersemester 1973/74);
   Schriftenreihe des Mathematischen Institutes der Universit\"at Freiburg,
   No. 5},
   publisher={Institut des Math\'ematiques, Universit\'e de Fribourg,
   Fribourg},
   date={1974},
   pages={viii+151},
   review={\MR{0412167 (54 \#294)}},
}

\bib{Wang94}{article}{
   author={Wang, H.-J.},
   title={On the Fitting ideals in free resolutions},
   journal={Michigan Math. J.},
   volume={41},
   date={1994},
   number={3},
   pages={587--608},
   issn={0026-2285},
   review={\MR{1297711 (96b:13013)}},
   doi={10.1307/mmj/1029005082},
}

\bib{Wang98}{article}{
author={Wang, Hsin-Ju},
   title={On the Jacobian ideals of affine algebras},
   journal={Comm. Algebra},
   volume={26},
   date={1998},
   number={5},
   pages={1577--1580},
   issn={0092-7872},
   review={\MR{1622431}},
   doi={10.1080/00927879808826222},
}

\bib{Wang99}{article}{
author={Wang, H.-J.},
   title={A uniform property of affine domains},
   journal={J. Algebra},
   volume={215},
   date={1999},
   number={2},
   pages={500--508},
   issn={0021-8693},
   review={\MR{1686203 (2000c:13023)}},
   doi={10.1006/jabr.1998.7745},
}

\end{biblist}
\end{bibdiv}

\end{document}